\theoremstyle{plain}
\newtheorem{theorem}{Theorem}
\newtheorem{lemma}{Lemma}
\theoremstyle{remark}
\theoremstyle{definition}
\newtheorem{definition}{Definition}
\newtheorem{assumption}{Assumption}
\newcommand{\bR}{\mathbb{R}}
\newcommand{\bE}{\mathbb{E}}
\newcommand{\cX}{\mathcal{X}}
\newcommand{\cC}{\mathcal{C}}
\newcommand{\cO}{\mathcal{O}}
\newcommand{\cN}{\mathcal{N}}
\newcommand{\argmin}[1]{\underset{#1}{\operatorname{arg}\operatorname{min}}\;}
\DeclarePairedDelimiter\abs{\lvert}{\rvert}
\DeclarePairedDelimiter\norm{\lVert}{\rVert}
\date{}
\title{Primal Methods for Variational Inequality Problems \\ with Functional Constraints}
\author[1,2]{Liang Zhang}
\author[1]{Niao He}
\author[2]{Michael Muehlebach}
\affil[1]{Department of Computer Science, ETH Zurich}
\affil[2]{Max Planck Institute for Intelligent Systems}
\affil[ ]{\texttt{\{liang.zhang, niao.he\}@inf.ethz.ch, michael.muehlebach@tuebingen.mpg.de}}
\begin{document}

\maketitle

\begin{abstract}
    Variational inequality problems are recognized for their broad applications across various fields including machine learning and operations research.
    First-order methods have emerged as the standard approach for solving these problems due to their simplicity and scalability. However, they typically rely on projection or linear minimization oracles to navigate the feasible set, which becomes computationally expensive in practical scenarios featuring multiple functional constraints.
    Existing efforts to tackle such functional constrained variational inequality problems have centered on primal-dual algorithms grounded in the Lagrangian function.
    These algorithms along with their theoretical analysis often require the existence and prior knowledge of the optimal Lagrange multipliers.
    In this work, we propose a simple primal method, termed Constrained Gradient Method (CGM), for addressing functional constrained variational inequality problems, without requiring any information on the optimal Lagrange multipliers. We establish a non-asymptotic convergence analysis of the algorithm for Minty variational inequality problems with monotone operators under smooth constraints. 
    Remarkably, our algorithms match the complexity of projection-based methods in terms of operator queries for both monotone and strongly monotone settings, while using significantly cheaper oracles based on quadratic programming. 
    Furthermore, we provide several numerical examples to evaluate the efficacy of our algorithms.
\end{abstract}

\section{Introduction}

Variational inequality problems \citep{stampacchia1964formes, lions1967variational, mancino1972convex} provide a unified framework for modeling optimization and equilibrium seeking problems and have been extensively studied across various disciplines.
Important examples include minimax optimization in machine learning \citep{goodfellow2014generative, gidel2018a, madry2018towards}, Nash equilibrium problems in game theory and economics \citep{nash1950equilibrium, nash1951non, nagurney1998network}, and frictional contact problems in physics \citep{johnson1987contact, christensen1998formulation, sofonea2009variational}.
For an in-depth presentation of the historical development and broader applications of variational inequality problems, we recommend the comprehensive books by \citet{kinderlehrer2000introduction}, and \citet{facchinei2003finite}.

In variational inequality problems, the objective is to find $x^*\in\cC$ such that
\begin{equation}
    F(x^*)^\top (x^* - x) \leq 0, \quad \forall\, x\in\cC.
    \label{eq:vip}
\end{equation}
Here, the constrained set $\cC\subseteq\bR^d$ is compact and convex, and the operator $F:\cC\rightarrow\bR^d$ is continuous, which ensures a nonempty and compact solution set \citep{facchinei2003finite}.
The definition above is also known as the Stampacchia variational inequality problem, and the solution $x^*$ is referred to as a strong solution to the variational inequality problem corresponding to $F$ and $\cC$. Our aim is to find a weak solution $x^*\in\cC$ to the variational inequality problem, where it solves the Minty variational inequality problem such that $F(x)^\top (x^* - x) \leq 0, \forall\, x\in\cC$.
In this work, we also assume the operator $F$ to be monotone, a standard assumption in the literature that encompasses important applications including convex minimization and convex-concave minimax optimization problems \citep{bauschke2011convex, rockafellar1970monotone, rockafellar1976monotone, nemirovski2004prox}.
More applications on monotone variational inequality problems can be found in \citet{thekumparampil2022lifted} and Chapter 1.4 of \citet{facchinei2003finite}. When $F$ is continuous and monotone, strong and weak solution sets are equivalent \citep{minty1962monotone, kinderlehrer2000introduction, diakonikolas2020halpern}, and weak solutions are commonly adopted in the related literature for monotone variational inequality problems \citep{nemirovski2004prox, nesterov2007dual, juditsky2011solving}.

There has been a growing interest in the development of first-order methods for solving monotone variational inequality problems, e.g., the projected gradient method \citep{facchinei2003finite}, the proximal point method \citep{rockafellar1976monotone}, the extragradient method \citep{korpelevich1976extragradient, tseng1995linear, nemirovski2004prox, censor2011subgradient}, the optimistic gradient method \citep{popov1980modification, mokhtari2020convergence, mokhtari2020unified, gorbunov2022last}, the dual extrapolation method \citep{nesterov2007dual, nesterov2006solving}, and the projected reflected gradient method \citep{malitsky2015projected}.
These methods, noted for their simplicity and scalability, involve querying the operator $F$ at specific points and accessing a projection oracle or a linear minimization oracle \citep{jaggi2013revisiting, gidel2017frank} for the feasible set $\cC$. Such oracles are easy to compute for simple feasible sets such as Euclidean balls for the projection oracle or polytopes for the linear minimization oracle. However, for general constrained sets, computing the projection or linear minimization oracle requires solving constrained optimization problems over the feasible set $\cC$. The latter remains challenging even with quadratic or linear objectives.

In this work, we focus on the general functional constrained setting, where the feasible set $\cC$ is described by $m$ convex inequality constraints,  that is,
\begin{equation}
    \cC = \{x\in\bR^d \,|\, g_i(x)\leq 0, \; \forall\, 1\leq i\leq m\}.
    \label{eq:constraint}
\end{equation}
Note that the feasible set $\cC$ does not necessarily allow for projection or linear minimization oracles. 
This encompasses important applications in machine learning including reinforcement learning with safety constraints \citep{xu2021crpo}, constrained Markov potential games \citep{alatur2023provably, jordan2024independent}, generalized Nash equilibrium problems with jointly-convex constraints \citep{facchinei2010generalized, jordan2023first}, and learning with fairness constraints \citep{zafar2019fairness, lowy2022stochastic}.

Previous works have predominantly focused on  primal-dual algorithms based on the (augmented) Lagrangian function to handle the constraints. These algorithms and their convergence guarantees crucially depend on information about the optimal Lagrange multipliers. 
\citet{yang2023solving} proposed an ADMM-based interior point method, later refined by \citet{chavdarova2024a}, and they assumed that either $F$ is strictly monotone or one of $g_i(x)$ is strictly convex to ensure the existence of the central path and boudedness of the optimal Lagrange multiplier.
\citet{boob2023first} extended the constraint extrapolation method \citep{boob2023stochastic} for constrained minimization problems to functional constrained variational inequality problems.
Their guarantees rely on the existence and boundedness of the optimal Lagrange multipliers. Meanwhile, the magnitude of the multipliers is essential to determine the stepsize and affects the convergence rate.

Primal methods serve as an alternative approach to avoid such information on the optimal Lagrange multipliers. They are also simpler to analyze and more straightforward to implement in practice. In fact, primal first-order methods have been extensively studied for functional constrained minimization problems. Prominent examples include Polyak's switching gradient method \citep{polyak1967general, lan2020algorithms, huang2023oracle}, cutting-plane methods \citep{kelley1960cutting}, the level bundle method \citep{lemarechal1995new}, and the constrained gradient descent method \citep{muehlebach2022constraints} motivated from nonsmooth dynamical systems, to just name a few.  However, primal methods have been much less explored for functional constrained variational inequality problems.

\textbf{Our contributions.} \; In this work, we propose a primal method, termed constrained gradient method (CGM; see Algorithm \ref{algo:cgd-vip}), for solving functional constrained variational inequality problems. Our algorithm takes inspiration from the constrained gradient descent method \citep{muehlebach2022constraints} and extends its principles to tackle the more challenging variational inequality problems. Unlike traditional projection-based methods that project each iterate onto the feasible set to handle constraints, CGM instead projects the update direction (velocity) onto a local, sparse, and linear approximation of the feasible set. The latter only requires solving a simple quadratic program with linear constraints. 

We establish the global convergence analysis of CGM under two settings: $(i)$ when the operator $F$ is monotone, and $(ii)$ when $F$ is strongly-monotone.  
Notably, in both settings, CGM enjoys (nearly) the same complexity on querying the operator $F$ as the optimal complexity achieved by projection-based methods, even though the projection oracle is replaced by quadratic programming solvers. For example, when the operator $F(x)$ is monotone,  we show that CGM achieves a weak $\epsilon$-solution with $\cO(1/\epsilon^2)$ queries to $F(x)$ and $\cO(1/\epsilon^2)$ calls to a quadratic programming solver. To the best of our knowledge, our algorithm is the first primal method that achieves the optimal complexity on queries to $F$ for functional constrained monotone variational inequality problems without requiring any information on the optimal Lagrange multipliers. This information includes whether the optimal Lagrange multiplier exists and how its magnitude can be estimated.
A comparison with existing results can be found in Table \ref{tab:compare}.

We further illustrate that the quadratic program at each iteration of CGM allows for efficient implementation and permits even closed-form solutions in special cases such as simplex constraints (Algorithm \ref{algo:cgm-sp}) or when there is only one (nonlinear) constraint function (Algorithm \ref{algo:cgm-1}).  This yields a direct implementation of our algorithm comparable to unconstrained methods.
Empirically, we evaluate the convergence of CGM through several numerical experiments and demonstrate its effectiveness.   

\begin{table}[t]
    \centering
    \caption{Complexity on queries of the operator $F$ to guarantee $\max_{x\in\cC} F(x)^\top (\hat x - x)\leq\epsilon$ (optimality) and $\max_i g_i(\hat x)\leq\epsilon$ (feasibility) for the monotone setting, where $\hat x$ is the output of the algorithm. ACVI \citep{yang2023solving, chavdarova2024a} guarantees last-iterate convergence for strong $\epsilon$-solutions $\max_{x\in\cC} F(\hat x)^\top (\hat x - x)\leq\epsilon$, but it also requires stronger assumptions that either $F$ is strictly monotone or one of $g_i$ is strictly convex. In the table, ``P'' denotes primal methods, while ``PD'' stands for primal-dual methods. Primal-dual methods require the existence of optimal Lagrange multipliers $\lambda^*$, while CGM does not. At each iteration of the algorithm, projected gradient method (PGM) requires access to a projection oracle (PO), Frank-Wolfe method requires access to a linear minimization oracle (LMO), ACVI requires solving a strongly-convex optimization (SCO) sub-problem, and CGM requires solving a simple and possibly sparse quadratic program (QP) $\min_{v\in V_\alpha(x_t)} (1/2)\norm{v+F(x_t)}^2$ (see details in Algorithm \ref{algo:cgd-vip}).}
    \vskip 0.1in
    \begin{tabular}{cccccc}
        \toprule
        Method & Type & Optimality & Feasibility & Per-Iter. Cost & Requirement \\
        \midrule
        PGM \citep{facchinei2003finite} & P & $\cO(1/\epsilon^2)$ & - & PO & - \\
        Frank-Wolfe \citep{thekumparampil2020projection} & P  & $\cO(1/\epsilon^2)$ & - & LMO & - \\
        ACVI \citep{yang2023solving, chavdarova2024a} & PD & $\cO(1/\epsilon^2)$ & $\cO(1/\epsilon^2)$  & SCO & Existence of $\lambda^*$ \\
        ConEx \citep{boob2023first} & PD & $\cO(1/\epsilon^2)$ & $\cO(1/\epsilon^2)$ & - & Existence of $\lambda^*$ \\
        CGM (Algorithm \ref{algo:cgd-vip}) & P & $\cO(1/\epsilon^2)$ & $\cO(1/\epsilon^2)$ & QP & - \\
        \bottomrule
    \end{tabular}
    \label{tab:compare}
\end{table}

\textbf{Literature review.} \; Early advances on iterative methods for solving variational inequality problems can be found in \citet{pang1982iterative, facchinei2003finite, noor2004some}. Here, we focus on the literature on first-order methods. For monotone variational inequality problems, the projected gradient method achieves the optimal convergence rate $\cO(1/\epsilon^2)$ \citep{facchinei2003finite}. If the operator is also Lipschitz, the optimal convergence rate $\cO(1/\epsilon)$ has been established for various algorithms, such as the extragradient \citep{nemirovski2004prox}, optimistic gradient \citep{mokhtari2020convergence}, and the dual extrapolation method \citep{nesterov2007dual}. These results focused on the deterministic setting with guarantees on the average iterates. Recent efforts include extension to the stochastic setting \citep{jiang2008stochastic, juditsky2011solving, koshal2012regularized, yousefian2017smoothing, kannan2019optimal, hsieh2019convergence, kotsalis2022simple, huang2022new, alacaoglu2022stochastic} and examination of last-iterate convergence guarantees \citep{golowich2020last, cai2022tight, cai2022finite, gorbunov2022extragradient, gorbunov2022last}. Convergence to strong $\epsilon$-solutions was considered in \citet{dang2015convergence, diakonikolas2020halpern}. Adaptive and parameter-free algorithms were developed in \citet{malitsky2020golden, diakonikolas2020halpern}. There is also a line of research on the relaxation of the monotone assumption, including the quasi-monotone setting \citep{aussel2004quasimonotone, nesterov2006solving, alakoya2022strong}, the pseudo-monotone setting \citep{solodov1999new, kannan2019optimal, boct2020forward}, and the non-monotone setting \citep{song2020optimistic, yang2020global, lee2021fast, diakonikolas2021efficient, pethick2023solving}.
All theses works either consider the unconstrained case or assume access to a projection oracle.

For functional constrained problems, previous works \citep{yang2023solving, chavdarova2024a, boob2023first} centered around primal-dual methods based on the Lagrangian function. Their analysis requires the existence and boundedness of the optimal Lagrange multipliers. Instead, we provide the first primal algorithm, CGM (Algorithm \ref{algo:cgd-vip}), that does not require any information on the optimal Lagrange multipliers. CGM reduces to constrained gradient descent \citep{muehlebach2022constraints} for minimization problems when $F$ is a gradient field. \citet{muehlebach2022constraints} only considered smooth and strongly-convex minimization problems, which form a subclass of Lipschitz and strongly-monotone variational inequality problems. Moreover, they use a stepsize dependent on the optimal Lagrange multipliers and require the quadratic program to be exactly solved. In contrast, our analysis of CGM applies to monontone variational inequality problems without the Lipschitz assumption on the operator $F$ and allows certain inaccuracies in the quadratic programming solver. More importantly, no information on the optimal Lagrange multipliers is required.

\textbf{Organization of the paper.} \; Our paper is organized as follows. The notation and background related to our main results are summarized in Section \ref{sec:pre}. In Section \ref{sec:cgm}, we present CGM and provide its convergence analysis for both monotone and strongly-monotone settings in Section \ref{sec:theory}. In Section \ref{sec:qp}, we discuss different ways to solve the quadratic programs in CGM and give several examples where CGM admits closed-form and direct updates. Numerical results are provided in Section \ref{sec:exp}. Section \ref{sec:conclude} concludes the paper and discusses possible future directions.

\section{Preliminaries} \label{sec:pre}

We rely on the following notation throughout the article.
The Euclidean norm is denoted by $\norm{\cdot}$, and we use $[m]$ to denote the set $\{1,2,\cdots,m\}$.
An operator $F: \cC\rightarrow\bR^d$ defined on a convex set $\cC\subseteq\bR^d$ is $\mu$--strongly-monotone if $\forall x,y\in\cC$, $(F(x) - F(y))^\top (x-y) \geq \mu\norm{x - y}^2$ with $\mu>0$ and monotone if $\mu=0$.
The operator $F$ is $\ell$-Lipschitz if $\norm{F(x) - F(y)} \leq \ell\norm{x - y}$, $\forall x,y\in\cC$.
A function $g: \cX\rightarrow\bR$ defined on a convex set $\cX\subseteq\bR^d$ is convex if $g(\alpha x + (1-\alpha) y)\leq \alpha g(x) + (1-\alpha) g(y)$, $\forall \alpha\in[0,1], \forall x,y\in\cX$.
The function $g(x)$ is $L$-Lipschitz if $\forall x,y\in\cX$, $\abs{g(x) - g(y)}\leq L\norm{x-y}$, and equivalently $\norm{\nabla g(x)} \leq L, \forall x\in\cX$ if it is differentiable.
The function $g(x)$ is $\ell$-smooth if it is differentiable and $\forall x,y\in\cX$, $\norm{\nabla g(x) - \nabla g(y)} \leq \ell\norm{x - y}$.

\subsection{Variational Inequality Problems}

In our paper, we assume that $\cC$ is contained in an Euclidean ball with diameter $D$.
This entails that the convex feasible set $\cC$ is also compact.
By Corollary 2.2.5 in \citet{facchinei2003finite}, the solution set of the variational inequality problem \eqref{eq:vip} is nonempty and compact. We are interested in finding the following weak $\epsilon$-approximate solution of the constrained variational inequality problem \eqref{eq:vip}.

\begin{definition}
    For some $\epsilon>0$, we call a point $\hat x\in\bR^d$ a weak $\epsilon$-solution of the variational inequality problem \eqref{eq:vip}, where the feasible set is of the form \eqref{eq:constraint}, if $F(x)^\top (\hat x - x)\leq\epsilon$, $\forall x\in\cC$ and $g_i(\hat x)\leq\epsilon$, $\forall i\in[m]$.
    \label{def:opt}
\end{definition}

The above definition of the weak $\epsilon$-solution for a variational inequality problem is commonly adopted in the related literature \citep{nemirovski2004prox, nesterov2007dual, juditsky2011solving, boob2023first}.
A strong $\epsilon$-solution, $\hat x \in\cC$, is characterized by $F(\hat x)^\top (\hat x - x) \leq \epsilon$, $\forall x\in\cC$. When $\epsilon=0$ and $F$ is monotone and continuous, the two solution sets are equivalent \citep{minty1962monotone, kinderlehrer2000introduction}. When $\epsilon>0$ and $F$ is monotone, every strong $\epsilon$-solution is also a weak $\epsilon$-solution.

\subsection{Constrained Gradient Descent}

Our main algorithm draws inspiration from the constrained gradient descent (CGD) method, introduced in \citet{muehlebach2022constraints} for solving minimization problems $\min_{x\in\cC} f(x)$ over the feasible set $\cC$. Motivated by analogies to non-smooth mechanics, the algorithm possesses the following distinctive feature.
Unlike projection-based methods that project each iterate onto the feasible set, CGD projects the update direction (velocity) onto a local and sparse velocity polytope
\begin{equation*}
    V_\alpha(x) = \{v\in\bR^d \,|\, \alpha g_i(x) + \nabla g_i(x)^\top v \leq 0, \;\forall i\in I_x \},
\end{equation*}
where $I_x=\{i\in[m] \,|\, g_i(x) \geq 0\}$ is the set of active constraints and $\alpha>0$ controls the tradeoff between optimizing the objective and feasibility. At each iteration, CGD takes the update
\begin{align*}
    & v_t = \argmin{v\in V_\alpha(x_t)} \, \frac{1}{2}\norm{v + \nabla f(x_t)}^2, \\
    & x_{t+1} = x_t + \eta v_t.
\end{align*}
The development of its theoretical understanding centers around the continuous-time case for constrained gradient flow \citep{muehlebach2022constraints}, with extensions to accelerated methods \citep{muehlebach2023accelerated} and online, stochastic, and nonconvex minimization settings \citep{kolev2023online, schechtman2022first, schechtman2023Stochastic}. Recently, convergence guarantees of the discrete-time algorithm are established for smooth strongly-convex minimization problems in \citet{muehlebach2022constraints} and nonsmooth convex minimization problems in \citet{kolev2023online}.

\section{The Constrained Gradient Method (CGM)} \label{sec:cgm}

A standard algorithm in the literature on monotone variational inequality problems is the projected gradient method, which requires access to a projection oracle and performs the following updates at each iteration $t=0, 1,\ldots, T-1$:
\begin{equation*}
    x_{t+1} = \argmin{x\in\cC} \,\frac{1}{2}\norm{x - (x_t - \eta F(x_t))}^2, 
\end{equation*}
where $\eta>0$ is the stepsize. The projection step onto the feasible set involves solving a constrained optimization problem. When the feasible set $\cC$ does not have a simple structure, the above procedure is not always efficient and implementable. 

Instead, we propose the constrained gradient method (CGM; see Algorithm \ref{algo:cgd-vip}) that takes insights from \citet{muehlebach2022constraints} to alternatively project the velocity (search direction $-F(x_t)$) onto a local, sparse, and linear approximation of the feasible set.
\begin{algorithm}
    \caption{Constrained Gradient Method (CGM)}
    \begin{algorithmic}[1]
        \Require Initialization $x_0\in\cC$, stepsize $\{\eta_t\}_{t=0}^{T-1}>0$, parameter $\alpha>0$, precision $\epsilon>0$.
        \For{$t=0,1,\cdots,T-1$}
            \State Build the set of active constraints with the auxiliary constraint $g_{m+1}(x)=\norm{x}^2 - D^2$,
            \begin{equation*}
                I_{x_t} = \{i\in[m+1] \,|\, g_i(x_t) \geq 0\}.
            \end{equation*}
            
            \State Construct the velocity polytope
            \begin{equation*}
                V_\alpha(x_t) = \{v\in\bR^d \,|\, \alpha g_i(x_t) + \nabla g_i(x_t)^\top v \leq 0, \, \forall\, i\in I_{x_t}\}.
            \end{equation*}
            
            \State Solve the quadratic program
            \begin{equation*}
                v_t \approx \argmin{v\in V_\alpha(x_t)}\, \frac{1}{2}\norm{v+F(x_t)}^2,
            \end{equation*}
            such that $v_t\in V_\alpha(x_t)$ and
            \begin{equation*}
                (v_t + F(x_t))^\top (v_t  - v) \leq\frac{\epsilon}{2}, \quad \forall\, v\in V_\alpha(x_t). 
            \end{equation*}
            
            \State Update the parameter
            \begin{equation*}
                x_{t+1} = x_t + \eta_t v_t.
            \end{equation*}
        \EndFor
        \Ensure $\bar x_T:=(1/T)\sum_{t=0}^{T-1}x_t$ (monotone) or $\bar x_T:=(2/T(T-1))\sum_{t=0}^{T-1}tx_t$ (strongly-monotone).
    \end{algorithmic}
    \label{algo:cgd-vip}
\end{algorithm}
Algorithm \ref{algo:cgd-vip} requires solving a sequence of quadratic programs, which can be easily implemented and is more efficient compared to the evaluation of projection oracles with general (nonlinear) constraints. Here, the velocity polytope $V_\alpha(x_t)$ only involves active constraints, i.e., constraints that are not strictly satisfied, and thus the quadratic program is sparse and efficiently solvable even for large scale problems if the number of active constraints is small.
Since $V_\alpha(x_t)$ can be understood as an extension of the tangent cone to infeasible points, our algorithm shares similarities with the method of feasible directions \citep{nocedal1999numerical, bazaraa2013nonlinear} that linearize the constraint functions and use linear or quadratic programming to identify update directions that ensure feasibility of the iterates. In contrast, CGM only involves active constraints and allows iterates to be infeasible.

In Algorithm \ref{algo:cgd-vip}, we allow for an inexact solution of the quadratic program, where the stopping criteria can be satisfied using various methods. More details on how this quadratic program can be solved are discussed in Section \ref{sec:qp}.
The auxiliary constraint $g_{m+1}(x)=\norm{x}^2-D^2$ is used to ensure that both the iterate $x_t$ and the projected velocity $v_t$ are bounded. It is added specifically to simplify the theoretical analysis and will not be used in actual implementations.
The output of Algorithm \ref{algo:cgd-vip} is the average of iterates, which is a standard strategy in the literature \citep{nesterov2013introductory, bubeck2015convex}.

\section{Convergence Analysis} \label{sec:theory}

In this section, we provide a convergence analysis of Algorithm \ref{algo:cgd-vip} for two settings: $(i)$ when operator $F$ is monotone, and $(ii)$ when operator $F$ is strongly-monotone. We also discuss improved results for a special instance with functional constrained convex minimization problems. 

\subsection{The Monotone Setting} 

We first consider the constrained monotone variational inequality problems \eqref{eq:vip} and make the following two standard assumptions on the operator $F$ and the feasible set $\cC$.

\begin{assumption}
    The operator $F(x)$ is continuous and monotone on $\bR^d$. Moreover, its norm is upper bounded by $L_F$, i.e., $\norm{F(x)}\leq L_F$, $\forall x\in\bR^d$.
    \label{asp:vip}
\end{assumption}

\begin{assumption}
    Each constraint function $g_i(x)$ is convex, $L_g$-Lipschitz and $\ell_g$-smooth on $\bR^d$ for each $i\in[m]$. The feasible set $\cC$ is non-empty and contained in an Euclidean ball with radius $D$, i.e., $\norm{x}\leq D$, $\forall\, x\in\cC$.
    \label{asp:constraint}
\end{assumption}

Assumption \ref{asp:vip} is standard in the analysis of monotone variational inequality problems \citep{nemirovski2004prox, juditsky2011solving}. Assumption \ref{asp:constraint} ensures that the feasible set is convex and compact, which guarantees a nonempty and compact solution set \citep{facchinei2003finite}. The smoothness and Lipschitzness assumptions on the constraint functions are also frequently used in previous works on functional constrained problems \citep{lan2020algorithms, boob2023stochastic, boob2023first}. Note that both assumptions on $F(x)$ and $g_i(x)$ are made on the entire domain $\bR^d$, as opposed to the usual case for projection-based methods where the properties only hold on the feasible set $\cC$. This is due to the fact that the trajectory of our methods is not guaranteed to be always feasible. However, as we explain in Lemma \ref{lm:bounded}, the iterates of our algorithms remain in an Euclidean ball with a radius depending on $D$ and $L_F$. A proof is provided in Appendix \ref{app:convex}. As a result, all properties in Assumptions \ref{asp:vip} and \ref{asp:constraint} can be relaxed to only hold on this ball. We keep the current statement for simplicity of the results.

\begin{lemma}
    Let $\gamma>1$. Under Assumptions \ref{asp:vip} and \ref{asp:constraint}, Algorithm \ref{algo:cgd-vip} with $\epsilon\leq 4L_F^2$ and the choice of $\alpha>0$ such that $\alpha\eta_t\leq (\gamma - 1)/(\gamma + 1), \forall\, t=0,1,\cdots,T-1$ satisfies $\forall t=0,1,\cdots,T-1$,
    \begin{align*}
        & \norm{x_t}^2 \leq \gamma D^2 + \gamma(\gamma - 1)\left(D+\frac{4L_F}{\alpha}\right)^2, \\
        & \norm{v_t}^2 \leq (\gamma + 1)\alpha^2 D^2 + \gamma(\gamma + 1) \alpha^2 \left(D+\frac{4L_F}{\alpha}\right)^2.
    \end{align*}
    \label{lm:bounded}
\end{lemma}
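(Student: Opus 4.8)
The plan is to prove the bound on $\norm{x_t}$ by induction on $t$, and then obtain the bound on $\norm{v_t}$ as a purely algebraic consequence. The linchpin is to exhibit, at every iterate, an explicit feasible direction in the velocity polytope $V_\alpha(x_t)$ that points toward $\cC$. Fix any $x^*\in\cC$ and set $\hat v := \alpha(x^* - x_t)$. I claim $\hat v\in V_\alpha(x_t)$: for each active index $i\in I_{x_t}$, convexity of $g_i$ gives $\nabla g_i(x_t)^\top(x^*-x_t)\le g_i(x^*)-g_i(x_t)\le -g_i(x_t)$, using $g_i(x^*)\le 0$. This covers the auxiliary index $m+1$ as well, since $g_{m+1}(x^*)=\norm{x^*}^2-D^2\le 0$ for $x^*\in\cC$. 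Multiplying by $\alpha>0$ yields $\alpha g_i(x_t)+\nabla g_i(x_t)^\top\hat v\le 0$, so $\hat v$ is feasible at every iterate.

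With this reference direction I would bound $\norm{v_t}$ in terms of $\norm{x_t}$. Writing $v_t+F(x_t)=(v_t-\hat v)+(\hat v+F(x_t))$ and combining the inexactness criterion (taken at $v=\hat v$) with Cauchy--Schwarz gives $\norm{v_t+F(x_t)}^2\le \tfrac{\epsilon}{2}+\norm{v_t+F(x_t)}\,\norm{\hat v+F(x_t)}$. Solving this scalar quadratic inequality in $a:=\norm{v_t+F(x_t)}$ and using $\sqrt{b^2+2\epsilon}\le b+\sqrt{2\epsilon}$ yields $\norm{v_t+F(x_t)}\le\norm{\hat v+F(x_t)}+\sqrt{\epsilon/2}$. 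Since $\norm{\hat v+F(x_t)}\le \alpha\norm{x^*-x_t}+L_F\le \alpha(\norm{x_t}+D)+L_F$, one more triangle inequality together with $\epsilon\le 4L_F^2$ (so that $2L_F+\sqrt{\epsilon/2}\le 4L_F$) gives the clean estimate $\norm{v_t}\le \alpha(\norm{x_t}+C)$, where $C:=D+4L_F/\alpha$. This holds unconditionally, i.e. independently of the induction.

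For the induction the base case is immediate, since $x_0\in\cC$ forces $\norm{x_0}^2\le D^2\le\gamma D^2\le B_x$, where $B_x:=\gamma D^2+\gamma(\gamma-1)C^2$ denotes the target bound. For the inductive step I would split on whether the auxiliary constraint is active. If $\norm{x_t}\ge D$, then $m+1\in I_{x_t}$ and $v_t\in V_\alpha(x_t)$ yields $2x_t^\top v_t\le -\alpha(\norm{x_t}^2-D^2)$; expanding $\norm{x_{t+1}}^2=\norm{x_t}^2+2\eta_t x_t^\top v_t+\eta_t^2\norm{v_t}^2$ and inserting the velocity bound (note $\eta_t^2\alpha^2=(\alpha\eta_t)^2$) reduces the claim $\norm{x_{t+1}}^2\le B_x$ to $\alpha\eta_t(\sqrt{B_x}+C)^2\le (\gamma-1)(D^2+\gamma C^2)$. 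A weighted split $(\sqrt{B_x}+C)^2\le(1+\tfrac1\gamma)B_x+(1+\gamma)C^2=(\gamma+1)(D^2+\gamma C^2)$ then collapses this to exactly the stepsize hypothesis $\alpha\eta_t\le(\gamma-1)/(\gamma+1)$. If instead $\norm{x_t}<D$, the direct estimate $\norm{x_{t+1}}\le\norm{x_t}+\eta_t\norm{v_t}\le D+\alpha\eta_t(D+C)$ together with $\alpha\eta_t\le(\gamma-1)/(\gamma+1)$ gives $\norm{x_{t+1}}^2\le B_x$ after a short computation that, using $C\ge D$ and $\gamma>1$, reduces to a polynomial inequality in $\gamma$ that holds for all $\gamma>1$ (at $C=D$ it is simply $(\gamma-1)^2\ge0$).

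Finally, the velocity bound follows by feeding $\norm{x_t}^2\le B_x$ into $\norm{v_t}^2\le\alpha^2(\norm{x_t}+C)^2$ and reusing the same weighted split with weight $1/\gamma$, which produces precisely $(\gamma+1)\alpha^2 D^2+\gamma(\gamma+1)\alpha^2(D+4L_F/\alpha)^2$. I expect the main obstacle to be the inductive step: the term $\eta_t^2\norm{v_t}^2$ contributes a cross term $2(\alpha\eta_t)^2 C\norm{x_t}$, and controlling it is exactly why the weighted split with $\beta=1/\gamma$ (rather than a crude $(a+b)^2\le 2a^2+2b^2$) is essential, and why both the $\norm{x_t}\ge D$ and $\norm{x_t}<D$ branches must be made to collapse to the single stepsize condition $\alpha\eta_t\le(\gamma-1)/(\gamma+1)$. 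By comparison, establishing $\hat v\in V_\alpha(x_t)$ and the scalar bound on $\norm{v_t+F(x_t)}$ are routine.
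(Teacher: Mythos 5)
Your proposal is correct and follows essentially the same route as the paper's proof in Appendix \ref{app:convex}: the feasible direction $\alpha(x^* - x_t)\in V_\alpha(x_t)$ via convexity, the unconditional bound $\norm{v_t}\le \alpha\norm{x_t}+\alpha D+4L_F$ extracted from the inexactness criterion with $\epsilon\le 4L_F^2$, and an induction on $\norm{x_t}^2$ split on whether the auxiliary ball constraint is active, with the weighted split $(a+b)^2\le(1+1/\gamma)a^2+(1+\gamma)b^2$ collapsing the active case to exactly the stepsize condition $\alpha\eta_t\le(\gamma-1)/(\gamma+1)$. The only cosmetic differences are that the paper keeps $\norm{x_k}^2$ symbolic in the active case, obtaining the contraction $(1-\alpha\eta_k/\gamma)$ on $\norm{x_k}^2-D^2$ rather than substituting the inductive bound immediately, and settles your $\norm{x_t}<D$ branch by an explicit term-by-term expansion instead of your reduction to the boundary case $C=D$.
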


Note that in Algorithm \ref{algo:cgd-vip}, we require the initial point to be feasible, namely, $x_0\in\cC$. However, according to Lemma \ref{lm:bounded}, this requirement can be relaxed to $\norm{x_0}^2 \leq \gamma D^2 + \gamma(\gamma - 1)(D+4L_F/\alpha)^2$, indicating that $x_0$ is allowed to be infeasible. We keep the current statement to simplify the presentation. With the fact that $\norm{v_t}$ is bounded, we can use standard techniques for the convergence analysis of CGM, as summarized in Theorem \ref{thm:convex}.

\begin{theorem}
    Under Assumptions \ref{asp:vip} and \ref{asp:constraint}, for $\epsilon\leq 4L_F^2$, Algorithm \ref{algo:cgd-vip} with stepsize $\eta_t\equiv\eta=D/(5L_F\sqrt{2T})$ and $\alpha=L_F/D$ satisfies
    \begin{equation*}
        F(x)^\top (\bar x_T - x) \leq \frac{10\sqrt{2}L_FD}{\sqrt{T}} + \frac{\epsilon}{2}, \quad\forall\, x\in\cC.
    \end{equation*}
    For the constraint violation, we have that for each $i\in[m+1]$ and $t=0,1,\cdots,T-1$,
    \begin{equation*}
        g_i(x_t) \leq \frac{\sqrt{2}D\max\{L_g, 5\ell_g D\}}{\sqrt{T}}.
    \end{equation*}
    \label{thm:convex}
\end{theorem}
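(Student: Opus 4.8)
The plan is to prove the two claims by separate mechanisms: the optimality gap through a monotonicity-plus-telescoping argument organized around the inexact quadratic program, and the constraint violation through a one-step descent recursion driven by the velocity polytope. For the optimality bound I would first exploit monotonicity to pass from the average iterate to the individual iterates: since $\bar x_T=(1/T)\sum_t x_t$ and $(F(x_t)-F(x))^\top(x_t-x)\ge 0$, every $x\in\cC$ satisfies $F(x)^\top(\bar x_T-x)\le (1/T)\sum_{t=0}^{T-1}F(x_t)^\top(x_t-x)$, so it suffices to bound the right-hand side. Expanding $\norm{x_{t+1}-x}^2=\norm{x_t+\eta_t v_t-x}^2$ and solving for the operator term yields the exact one-step identity
\[
F(x_t)^\top(x_t-x)=\frac{\norm{x_t-x}^2-\norm{x_{t+1}-x}^2}{2\eta}+(v_t+F(x_t))^\top(x_t-x)+\frac{\eta}{2}\norm{v_t}^2,
\]
which isolates a telescoping term, a cross term coupling the residual $v_t+F(x_t)$ to $x_t-x$, and a quadratic remainder.

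The heart of the argument is to show the cross term is controlled by the quadratic program alone. I would first check that $\alpha(x-x_t)$ is admissible for the velocity polytope: for any active $i\in I_{x_t}$, convexity gives $\nabla g_i(x_t)^\top(x-x_t)\le g_i(x)-g_i(x_t)\le -g_i(x_t)$ (using $g_i(x)\le 0$ for $x\in\cC$), hence $\alpha g_i(x_t)+\nabla g_i(x_t)^\top(\alpha(x-x_t))\le \alpha g_i(x_t)+\alpha(-g_i(x_t))=0$, so $\alpha(x-x_t)\in V_\alpha(x_t)$. Applying the inexact stopping criterion at $v=\alpha(x-x_t)$ then gives $(v_t+F(x_t))^\top(x_t-x)\le \tfrac{\epsilon}{2\alpha}-\tfrac{1}{\alpha}(v_t+F(x_t))^\top v_t$. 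The remaining and main obstacle is the sign of $(v_t+F(x_t))^\top v_t$. For the exact projection the KKT conditions give $v_t+F(x_t)=-\sum_{i\in I_{x_t}}\lambda_i\nabla g_i(x_t)$ with $\lambda_i\ge 0$, and complementary slackness forces $\nabla g_i(x_t)^\top v_t=-\alpha g_i(x_t)$ on the support, whence $(v_t+F(x_t))^\top v_t=\alpha\sum_i\lambda_i g_i(x_t)\ge 0$ since active constraints satisfy $g_i(x_t)\ge 0$. This nonnegativity is precisely what removes the constant-order contribution that a naive Cauchy-Schwarz or Young bound on $(v_t+F(x_t))^\top v_t$ would leave behind; establishing it for the inexact solve (or absorbing its error into $\epsilon$) is the delicate point. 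Granting it, the cross term is at most $\epsilon/(2\alpha)$.

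Substituting back and summing over $t$, the telescoping collapses to $\norm{x_0-x}^2/(2\eta)\le 4D^2/(2\eta)$ because $x_0,x\in\cC$ both lie in the radius-$D$ ball, leaving
\[
F(x)^\top(\bar x_T-x)\le \frac{\norm{x_0-x}^2}{2\eta T}+\frac{\eta}{2T}\sum_{t=0}^{T-1}\norm{v_t}^2+\frac{\epsilon}{2\alpha}.
\]
I would bound $\norm{v_t}^2$ via Lemma \ref{lm:bounded}, which under $\alpha=L_F/D$ is of order $L_F^2$, and then insert $\eta=D/(5L_F\sqrt{2T})$: the dominant contribution of the first term is $10\sqrt2\,L_FD/\sqrt T$, the quadratic remainder is of the same order $\cO(L_FD/\sqrt T)$, and the accuracy term is governed by the choice of $\alpha$, so collecting constants delivers the stated optimality bound.

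For feasibility I would track a single $g_i$ along the trajectory using $\ell_g$-smoothness, $g_i(x_{t+1})\le g_i(x_t)+\eta\nabla g_i(x_t)^\top v_t+(\ell_g\eta^2/2)\norm{v_t}^2$. When $i$ is active ($g_i(x_t)\ge 0$), membership $v_t\in V_\alpha(x_t)$ forces $\nabla g_i(x_t)^\top v_t\le -\alpha g_i(x_t)$, giving the contraction $g_i(x_{t+1})\le(1-\eta\alpha)g_i(x_t)+(\ell_g\eta^2/2)\norm{v_t}^2$; when $i$ is inactive I bound $\nabla g_i(x_t)^\top v_t\le L_g\norm{v_t}$ by Lipschitzness. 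A short induction (using $\eta\alpha\le 1$) then shows $g_i(x_t)\le\max\{\eta L_g M+(\ell_g\eta^2/2)M^2,\ \ell_g\eta M^2/(2\alpha)\}$ for all $t$, where $M=\max_s\norm{v_s}$: the first branch captures a single step escaping the feasible region and scales like $L_g$, while the second is the fixed point of the contraction and scales like $\ell_g D$. Inserting $M=\cO(L_F)$ from Lemma \ref{lm:bounded}, $\alpha=L_F/D$ and $\eta=D/(5L_F\sqrt{2T})$ and simplifying yields the $\sqrt2\,D\max\{L_g,5\ell_g D\}/\sqrt T$ bound. I expect the sign inequality $(v_t+F(x_t))^\top v_t\ge 0$ under inexact solves to be the main obstacle in the optimality part, and the uniform-in-$t$ induction (as opposed to an averaged estimate) to be the decisive technical step for feasibility.
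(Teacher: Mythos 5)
Your feasibility argument and the telescoping skeleton of your optimality argument follow the paper, but the step you yourself flag as delicate is a genuine gap, and the paper avoids it rather than resolves it. The sign condition $(v_t+F(x_t))^\top v_t\geq 0$ is \emph{not} implied by the inexact stopping criterion: the criterion only localizes $v_t$ near the exact minimizer $v_t^*$ via strong convexity of the QP objective, $\tfrac{1}{2}\norm{v_t-v_t^*}^2\leq (v_t+F(x_t))^\top(v_t-v_t^*)\leq \epsilon/2$, so $\norm{v_t-v_t^*}\leq\sqrt{\epsilon}$. Perturbing the exact KKT identity $(v_t^*+F(x_t))^\top v_t^*=\alpha\sum_i\lambda_i g_i(x_t)\geq 0$ by such a $v_t$ changes the inner product by $\cO\big(\sqrt{\epsilon}\,(\norm{v_t^*}+L_F)\big)=\cO(\sqrt{\epsilon}\,L_F)$, which is far larger than $\epsilon$ for small $\epsilon$; after dividing by $\alpha$ this injects an $\cO(\sqrt{\epsilon}\,D)$ error into the gap, so "absorbing it into $\epsilon$" does not recover the theorem's $\epsilon/2$ accuracy. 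Separately, even granting the sign condition, your test point $v=\alpha(x-x_t)$ leaves a residual $\epsilon/(2\alpha)=\epsilon D/(2L_F)$ rather than the stated $\epsilon/2$.

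The paper's proof uses a different comparator that makes the problematic term cancel identically. Since $\nabla g_i(x_t)^\top(x-x_t)\leq g_i(x)-g_i(x_t)\leq 0$ for every active $i$ and every $x\in\cC$, the velocity polytope is closed under the translation $v\mapsto v+x-x_t$; in particular $v_t+x-x_t\in V_\alpha(x_t)$. Plugging this point into the stopping criterion gives $(v_t+F(x_t))^\top(x_t-x)\leq\epsilon/2$ directly, with no KKT system, no sign condition on $(v_t+F(x_t))^\top v_t$, and the correct $\epsilon/2$ residual; your one-step identity, Lemma \ref{lm:bounded} with $\gamma=3/2$ (giving $\norm{v_t}^2<100L_F^2$), the summation, and the monotonicity step then go through as you wrote them. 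One minor repair on feasibility: in the inactive case your smoothness bound carries the extra term $(\ell_g\eta^2/2)\norm{v_t}^2$, which makes the induction overshoot the stated constant when $L_g\geq 5\ell_g D$; the paper instead uses convexity with the gradient at the \emph{new} point, $g_i(x_{k+1})\leq g_i(x_k)+\nabla g_i(x_{k+1})^\top(x_{k+1}-x_k)$, which has no quadratic remainder and keeps the inactive branch at exactly $\sqrt{2}L_gD/\sqrt{T}$; with that substitution your contraction/fixed-point induction matches the paper's.
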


\begin{proof}
    We first show the optimality guarantees. For each $i\in I_{x_t}$ such that $g_i(x_t)\geq 0$ is active, we conclude that $\forall x\in\cC$,
    \begin{align*}
        g_i(x_t) + \nabla g_i(x_t)^\top (x - x_t)
        & \leq
        g_i(x) \\
        & \leq
        0 \\
        & \leq
        g_i(x_t),
    \end{align*}
    where we have used the assumption that $g_i(x)$ is convex. This gives us $\nabla g_i(x_t)^\top(x - x_t) \leq 0, \forall x\in\cC$. By the definition of $V_\alpha(x_t)=\{v\in\bR^d \,|\, \alpha g_i(x_t) + \nabla g_i(x_t)^\top v\leq 0, \forall\,i\in I_{x_t}\}$, we obtain that
    \begin{equation*}
        (v + x - x_t) \in V_\alpha(x_t), \quad \forall\, x\in\cC, \; \forall\, v\in V_\alpha(x_t).
    \end{equation*}
    The quadratic programming solver guarantees that $v_t\in V_\alpha(x_t)$ and $(v_t + F(x_t))^\top(v_t - v) \leq \epsilon/2, \forall\, v\in V_\alpha(x_t)$. For any $x\in\cC$, we set $v = v_t + x - x_t \in V_\alpha(x_t)$ and get that
    \begin{equation}
        \begin{split}
            F(x_t)^\top(x_t - x)
            & \leq
            v_t^\top(x - x_t) + \frac{\epsilon}{2} \\
            & =
            \frac{1}{\eta_t}(x_{t+1} - x_t)^\top(x - x_t) + \frac{\epsilon}{2} \\
            & =
            \frac{1}{2\eta_t}\norm{x - x_t}^2 - \frac{1}{2\eta_t}\norm{x - x_{t+1}}^2 + \frac{1}{2\eta_t}\norm{x_{t+1} - x_t}^2 + \frac{\epsilon}{2} \\
            & =
            \frac{1}{2\eta_t}\norm{x - x_t}^2 - \frac{1}{2\eta_t}\norm{x - x_{t+1}}^2 + \frac{\eta_t}{2}\norm{v_t}^2 + \frac{\epsilon}{2}.
        \end{split}
        \label{eq:gap-onestep}
    \end{equation}
    We invoke Lemma \ref{lm:bounded} with $\gamma=3/2$ and $\epsilon\leq 4L_F^2$, and set $\eta_t=\eta=D/(5\sqrt{2} L_F\sqrt{T})$ and $\alpha=L_F/D$ such that $\alpha\eta_t=1/(5\sqrt{2T})<1/5$. This yields
    \begin{align*}
        \norm{v_t}^2
        & \leq
        \frac{5}{2}\alpha^2 D^2 + \frac{15}{4}\alpha^2\left(D + \frac{4L_F}{\alpha}\right)^2 \\
        & <
        100L_F^2.
    \end{align*}
    Summing up from $t=0$ to $T-1$ and dividing both sides by $T$, we obtain that $\forall\, x\in\cC$,
    \begin{align*}
        \frac{1}{T}\sum_{t=0}^{T-1} F(x_t)^\top(x_t - x)
        & \leq
        \frac{\norm{x-x_0}^2}{2\eta T} + 50 \eta L_F^2 + \frac{\epsilon}{2} \\
        & \leq
        \frac{10\sqrt{2} L_F D}{\sqrt{T}} + \frac{\epsilon}{2}.
    \end{align*}
    When $F(x)$ is monotone, it holds that $(F(x_t) - F(x))^\top(x_t - x) \geq 0$, and we have $\forall\,x\in\cC$,
    \begin{align*}
        F(x)^\top (\bar x_T - x)
        & =
        \frac{1}{T}\sum_{t=0}^{T-1} F(x)^\top (x_t - x) \\
        & \leq
        \frac{1}{T}\sum_{t=0}^{T-1} F(x_t)^\top(x_t - x) \\
        & \leq
        \frac{10 \sqrt{2} L_F D}{\sqrt{T}} + \frac{\epsilon}{2}.
    \end{align*}

    We now show the feasibility guarantees using induction. The base case is true since $x_0\in\cC$ and then $g_i(x_0)\leq 0$ for each $i\in[m+1]$. Assuming the claim holds for some $k\geq 0$, i.e., $g_i(x_k)\leq \sqrt{2} D\max\{L_g, 5\ell_g D\}/\sqrt{T}, \forall\,i\in[m+1]$, we now show the same is true for $k+1$. We consider the following two cases.

    $(a)$ For each $i\notin I_{x_k}$, we know $g_i(x_k)<0$. Applying Lemma \ref{lm:bounded}, by convexity and Lipschitz continuity of $g_i(x)$, we have
    \begin{equation}
        \begin{split}
            g_i(x_{k+1})
            & \leq
            g_i(x_k) + \nabla g_i(x_{k+1})^\top(x_{k+1} - x_k) \\
            & <
            \eta \nabla g_i(x_{k+1})^\top v_k \\
            & \leq
            10\,\eta L_g L_F \\
            & =
            \frac{\sqrt{2}L_g D}{\sqrt{T}}.
        \end{split}
        \label{eq:fea-case(i)}
    \end{equation}
    
    $(b)$ For all $i\in I_{x_k}$, we know $g_i(x_k)\geq0$ and $\eta\alpha g_i(x_k) + \nabla g_i(x_k)^\top (x_{k+1} - x_k)\leq 0$ from the construction of $V_\alpha(x_k)$. By $\ell_g$-smoothness of $g_i(x)$, we have that
    \begin{equation}
        \begin{split}
            g_i(x_{k+1})
            & \leq
            g_i(x_k) + \nabla g_i(x_k)^\top (x_{k+1} - x_k) + \frac{\ell_g}{2}\norm{x_{k+1} - x_k}^2 \\
            & \leq
            (1 - \alpha\eta) g_i(x_k) + 50\,\ell_g\eta^2L_F^2 \\
            & \leq
            \frac{\sqrt{2}D\max\{L_g, 5\ell_g D\}}{\sqrt{T}} + \frac{\ell_g D^2}{T} - \frac{D\max\{L_g, 5\ell_g D\}}{5T} \\
            & \leq
            \frac{\sqrt{2}D\max\{L_g, 5\ell_g D\}}{\sqrt{T}}.
        \end{split}
        \label{eq:fea-case(ii)}
    \end{equation}
    For both cases, we are able to show that the claim is true for $k+1$. As a result, the theorem holds true for every $t$, which completes the proof. 
\end{proof}

The above theorem implies that for CGM to achieve a weak $\epsilon$-solution, the number of queries to the operator $F(x)$ is at most $\cO(1/\epsilon^2)$,  and the number of calls to a quadratic programming solver is at most $\cO(1/\epsilon^2)$. Notably, the complexity of querying the operator $F$ matches that of projection-based methods \citep{nesterov2013introductory} or Frank-Wolfe type methods \citep{thekumparampil2020projection}, known to be optimal even for simple constraint sets according to the lower complexity bound (Theorem 3.2.1 in \citet{nesterov2013introductory}). 

In stark contrast, the convergence analysis of previous primal-dual methods relies on the information about the optimal Lagrange multipliers.
For instance, the ADMM-based interior point method \citep{yang2023solving, chavdarova2024a} achieves strong $\epsilon$-solutions with the same $\cO(1/\epsilon^2)$ queries to $F$ and requires $\cO(1/\epsilon^2)$ calls to a program that solves a sequence of strongly-convex minimization problems. However, their results require the strong assumption that either $F$ is strictly-monotone or one of the $g_i(x)$ is strictly-convex. The constraint extrapolation (ConEx) method \citep{boob2023first, boob2023stochastic} achieves a weak $\epsilon$-solution as in Definition \ref{def:opt} with $\cO(1/\epsilon^2)$ calls to the operator $F$. However, they also require the existence and boundedness of the optimal Lagrange multipliers.
To the best of our knowledge, CGM is the first primal method that achieves the optimal complexity on queries to $F$ for functional constrained monotone variational inequality problems without any information on the optimal Lagrange multipliers.

\subsection{The Strongly-Monotone Setting}

Next, we extend the convergence analysis of CGM (Algorithm \ref{algo:cgd-vip}) to the strongly-monotone setting, as stated in the theorem below.

\begin{theorem}
    Let $\gamma>1$ and let $F(x)$ be $\mu$--strongly-monotone on $\bR^d$. Under Assumptions \ref{asp:vip} and \ref{asp:constraint}, Algorithm \ref{algo:cgd-vip} with $T\geq2$, $\epsilon\leq 4L_F^2$, stepsize $\eta_t=1/\mu (t+1)$, and $\alpha=\mu(\gamma-1)/(\gamma+1)$ satisfies
    \begin{equation*}
        F(x)^\top (\bar x_T - x) \leq \frac{\mu M^2}{T-1} + \frac{\epsilon}{2}, \quad\forall x\in\cC,
    \end{equation*}
    where $M:=2(\gamma+1)(D+2L_F/\mu)$. For the constraint violation, we have $\forall i\in[m+1]$,
    \begin{equation*}
        g_i(\bar x_T) \leq \frac{\max\{12ML_g, 6\,\ell_g M^2\} + 6\,\ell_g M^2\,\zeta(1+2/(\gamma+1))}{(T+1)^{1-2/(\gamma+1)}},
    \end{equation*}
    where $\zeta(p):=\sum_{n=1}^\infty 1/n^p$ is the Riemann zeta function for $p>1$.
    \label{thm:strongly-convex}
\end{theorem}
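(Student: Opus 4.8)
The plan is to mirror the structure of the proof of Theorem~\ref{thm:convex}, but adapt the one-step inequality to the decreasing stepsize $\eta_t=1/(\mu(t+1))$ and the $t$-weighted average, and then carry out a more delicate feasibility analysis. Throughout write $\beta:=(\gamma-1)/(\gamma+1)=1-2/(\gamma+1)$, so that $\alpha=\mu\beta$ and $\alpha\eta_t=\beta/(t+1)\leq (\gamma-1)/(\gamma+1)$ for every $t$, which is exactly the hypothesis needed to invoke Lemma~\ref{lm:bounded}. For optimality I start from the one-step bound \eqref{eq:gap-onestep}, which holds verbatim here since its derivation only used convexity of the $g_i$ and the QP stopping criterion. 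Applying $\mu$-strong monotonicity in the form $F(x)^\top(x_t-x)\leq F(x_t)^\top(x_t-x)-\mu\norm{x_t-x}^2$ and substituting $1/(2\eta_t)=\mu(t+1)/2$ gives, for every $x\in\cC$,
\[
F(x)^\top(x_t-x)\leq \frac{\mu(t-1)}{2}\norm{x-x_t}^2-\frac{\mu(t+1)}{2}\norm{x-x_{t+1}}^2+\frac{\eta_t}{2}\norm{v_t}^2+\frac{\epsilon}{2}.
\]
The key algebraic observation is that multiplying by the weight $t$ turns the first two terms into a telescoping pair $A_t-A_{t+1}$ with $A_t:=\tfrac{\mu}{2}t(t-1)\norm{x-x_t}^2$ (indeed $A_{t+1}=\tfrac{\mu}{2}t(t+1)\norm{x-x_{t+1}}^2$). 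Summing over $t=0,\dots,T-1$ collapses the telescope to $A_0-A_T\leq0$, and using $t\eta_t=t/(\mu(t+1))\leq1/\mu$ together with the velocity bound $\norm{v_t}^2\leq\mu^2M^2$ leaves $\sum_{t=0}^{T-1}tF(x)^\top(x_t-x)\leq T\mu M^2/2+\tfrac{\epsilon}{2}\cdot\tfrac{T(T-1)}{2}$. Dividing by $\sum_{t=0}^{T-1}t=T(T-1)/2$ and recognizing the left side as $F(x)^\top(\bar x_T-x)$ (the weights $2t/(T(T-1))$ sum to one) yields the stated optimality bound. The inequality $\norm{v_t}^2\leq\mu^2M^2$ follows from Lemma~\ref{lm:bounded} by writing $\alpha^2(D+4L_F/\alpha)^2=(\alpha D+4L_F)^2$, using $\alpha<\mu$, and checking the elementary fact $\mu^2D^2+\gamma(\mu D+4L_F)^2\leq 4(\gamma+1)(\mu D+2L_F)^2$, which reduces to a manifestly nonnegative quadratic in $\mu D$ and $L_F$.

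The feasibility analysis is the main obstacle and departs substantially from the monotone case, where a constant stepsize made every iterate $\epsilon$-feasible. Since the stepsize now decays, I instead prove a per-iterate decay estimate of the form $g_i(x_t)\leq \Phi_t/(t+1)^\beta$ by induction on $t$, where $\Phi_t:=C_0+\ell_gM^2\sum_{s=1}^{t}s^{-(2-\beta)}$ is nondecreasing and bounded by $C_0+\ell_gM^2\,\zeta(2-\beta)$; note $2-\beta=1+2/(\gamma+1)$, so $\zeta(2-\beta)$ is precisely the $\zeta(1+2/(\gamma+1))$ of the statement. Exactly as in the monotone proof there are two cases at each step. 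If constraint $i$ is inactive (case (a)), convexity and Lipschitzness give $g_i(x_{t+1})\leq \eta_t\nabla g_i(x_{t+1})^\top v_t\leq \eta_t L_g\norm{v_t}\leq L_gM/(t+1)$, which is absorbed into $\Phi_{t+1}/(t+2)^\beta$ once $C_0$ is large enough (using $(t+2)^\beta/(t+1)\leq2$). If constraint $i$ is active (case (b)), $\ell_g$-smoothness and the defining inequality of $V_\alpha(x_t)$ give the contraction
\[
g_i(x_{t+1})\leq\Big(1-\frac{\beta}{t+1}\Big)g_i(x_t)+\frac{\ell_gM^2}{2(t+1)^2}.
\]

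The estimate that makes this induction close is $\bigl(1-\tfrac{\beta}{t+1}\bigr)(t+1)^{-\beta}\leq(t+2)^{-\beta}$, which follows from the concavity bound $(1+\tfrac{1}{t+1})^\beta\leq1+\tfrac{\beta}{t+1}$; combined with the increment $\Phi_{t+1}-\Phi_t=\ell_gM^2(t+1)^{-(2-\beta)}$, it converts the additive error $\tfrac{\ell_gM^2}{2(t+1)^2}$ exactly into the growth of $\Phi_t$. This is where the Riemann zeta term originates: the accumulated additive errors along an active phase sum to at most $\ell_gM^2\sum_{s\geq1}s^{-(2-\beta)}=\ell_gM^2\zeta(2-\beta)$. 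Finally I pass to the weighted average by convexity of $g_i$,
\[
g_i(\bar x_T)\leq\frac{2}{T(T-1)}\sum_{t=0}^{T-1}t\,g_i(x_t)\leq\frac{2\bigl(C_0+\ell_gM^2\zeta(2-\beta)\bigr)}{T(T-1)}\sum_{t=0}^{T-1}\frac{t}{(t+1)^\beta},
\]
and bound $\sum_{t=0}^{T-1}t(t+1)^{-\beta}\leq\sum_{k=1}^{T}k^{1-\beta}\leq(T+1)^{2-\beta}/(2-\beta)$ by comparison with an integral. Writing $(T+1)^{2-\beta}/(T(T-1))=(T+1)^{-\beta}\cdot(T+1)^2/(T(T-1))$ and noting the last ratio is uniformly bounded for $T\geq2$ produces a bound of order $(T+1)^{-\beta}=(T+1)^{-(1-2/(\gamma+1))}$, matching the statement. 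I expect the bookkeeping of the numerical constants—tracking the case (a) contribution, the integral comparison, and the ratio $(T+1)^2/(T(T-1))$, and in particular folding the case (a) term and the $s=1$ term of the zeta sum into the single $\max\{12ML_g,6\ell_gM^2\}$ expression—to be the most error-prone part, while the conceptual core is the contraction estimate together with the telescoping weight $t$.
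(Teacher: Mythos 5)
Your optimality argument is the paper's own, step for step: the same one-step bound \eqref{eq:gap-onestep}, the same use of $\mu$--strong monotonicity, the same weight $t$ producing the telescope $\tfrac{\mu}{2}t(t-1)\norm{x-x_t}^2 - \tfrac{\mu}{2}t(t+1)\norm{x-x_{t+1}}^2$, and the same velocity bound $\norm{v_t}\leq \mu M$ from Lemma \ref{lm:bounded} (your quadratic check of $\mu^2D^2+\gamma(\mu D+4L_F)^2\leq 4(\gamma+1)(\mu D+2L_F)^2$ is correct). The feasibility half is essentially the paper's argument in different packaging: the paper multiplies the contraction by $(t+2)^{c_\gamma}$ (its \eqref{eq:pf-fea-onestep-trans}, with $c_\gamma$ equal to your $\beta$) and unrolls the recursion from the last feasible iterate $\kappa(t)$, restarting there via case (a) or (b); you instead run one global induction $g_i(x_t)\leq \Phi_t/(t+1)^\beta$ with a nondecreasing budget $\Phi_t$ that absorbs the case-(a) term through $C_0$ (taking $C_0\geq 2ML_g$ works, since $(t+2)^\beta\leq 2(t+1)$) and the additive errors through increments summing to the same $\zeta(2-\beta)$ series. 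Your version is arguably cleaner, as it dispenses with the $\kappa(t)$ bookkeeping, and your key estimate $(1-\beta/(t+1))(t+1)^{-\beta}\leq (t+2)^{-\beta}$ is exactly the paper's mean-value inequality; both inductions close for the same reason. The only place you fall short of the statement \emph{as written} is the final constant: your integral comparison $\sum_{k=1}^{T}k^{1-\beta}\leq (T+1)^{2-\beta}/(2-\beta)$ combined with $(T+1)^2/(T(T-1))\leq 9/2$ for $T\geq 2$ yields a prefactor up to $9$, i.e., roughly $18ML_g+9\,\ell_gM^2\zeta(2-\beta)$, which is weaker than the stated $\max\{12ML_g,6\,\ell_gM^2\}+6\,\ell_gM^2\zeta(1+2/(\gamma+1))$. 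The paper instead bounds $\sum_{t=1}^{T}t^{1-\beta}\leq T\left((T+1)/2\right)^{1-\beta}$ by Jensen's inequality on the concave map $x\mapsto x^{1-\beta}$ and uses $(T+1)/(T-1)\leq 3$, which gives the factor $6$ producing the stated constants; substituting that single step into your argument recovers them exactly. This is purely a constants issue, which you flagged yourself --- your rate $(T+1)^{-\beta}$ with $\beta=1-2/(\gamma+1)$, and everything structural around it, is correct.
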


\begin{proof}
    By applying the same reasoning as in \eqref{eq:gap-onestep}, see the proof of Theorem \ref{thm:convex}, we have that $\forall\,x\in\cC$,
    \begin{equation*}
        F(x_t)^\top(x_t - x) \leq \frac{1}{2\eta_t}\norm{x - x_t}^2 - \frac{1}{2\eta_t}\norm{x - x_{t+1}}^2 + \frac{\eta_t}{2}\norm{v_t}^2 + \frac{\epsilon}{2}.
    \end{equation*}
    When $F(x)$ is $\mu$--strongly-monotone, i.e., $(F(x_t) - F(x))^\top(x_t - x) \geq \mu\norm{x_t - x}^2$, we obtain that $\forall\,x\in\cC$,
    \begin{equation}
        \begin{split}
            F(x)^\top (x_t - x)
            & \leq
            F(x_t)^\top (x_t - x) - \mu\norm{x_t - x}^2 \\
            & \leq
            \left(\frac{1}{2\eta_t} - \mu\right)\norm{x - x_t}^2 - \frac{1}{2\eta_t}\norm{x - x_{t+1}}^2 + \frac{\eta_t}{2}\norm{v_t}^2 + \frac{\epsilon}{2}.
        \end{split}
        \label{eq:pf-sc-onestep}
    \end{equation}
    We set $\eta_t = 1/(\mu(t+1))$ and $\alpha=\mu(\gamma - 1)/(\gamma + 1)$ such that $\alpha\eta_t\leq (\gamma - 1)/(\gamma + 1)$ for every $t=0,1\cdots,T-1$ and constant $\gamma>1$. By Lemma \ref{lm:bounded}, we know that
    \begin{align*}
        \norm{v_t}^2
        & \leq
        \frac{1}{\gamma+1} \left[
            (\gamma-1)^2\mu^2D^2 + \gamma\Big((\gamma-1)\mu D + 4(\gamma+1)L_F\Big)^2
        \right] \\
        & <
        4(\gamma + 1)^2(\mu D + 2L_F)^2 \\
        & :=
        \mu^2 M^2,
    \end{align*}
    where we let $M=2(\gamma+1)(D + 2L_F/\mu)$ for simiplicity of the notation. Inserting the value of $\eta_t$ and $\norm{v_t}^2\leq\mu^2M^2$, \eqref{eq:pf-sc-onestep} becomes $\forall\,x\in\cC$,
    \begin{equation*}
        F(x)^\top (x_t - x) \leq
        \frac{\mu(t-1)}{2}\norm{x - x_t}^2 - \frac{\mu(t+1)}{2}\norm{x - x_{t+1}}^2 + \frac{\mu M^2}{2(t+1)} + \frac{\epsilon}{2}.
    \end{equation*}
    By multiplying both sides by $t\geq0$, we obtain
    \begin{equation*}
        t\,F(x)^\top (x_t - x) \leq
        \frac{\mu(t-1)t}{2}\norm{x - x_t}^2 - \frac{\mu\, t(t+1)}{2}\norm{x - x_{t+1}}^2 + \frac{\mu M^2}{2} + \frac{t\,\epsilon}{2}.
    \end{equation*}
    We recall that $\bar x_T=\sum_{t=0}^{T-1} tx_t / (\sum_{t=0}^{T-1} t)$. Summing up from $t=0$ to $T-1$ and dividing both sides by $\sum_{t=0}^{T-1} t$, we then have that $\forall\,x\in\cC$,
    \begin{align*}
        F(x)^\top (\bar x_T - x)
        & =
        \frac{2}{T(T-1)}\sum_{t=0}^{T-1} t\,F(x)^\top (x_t - x) \\
        & \leq
        \frac{2}{T(T-1)}\sum_{t=0}^{T-1} \frac{\mu M^2}{2} + \frac{2}{T(T-1)}\sum_{t=0}^{T-1} \frac{t\,\epsilon}{2} \\
        & \leq
        \frac{4(\gamma+1)^2\mu (D + 2L_F/\mu)^2}{T-1} + \frac{\epsilon}{2}.
    \end{align*}

    We proceed by showing the feasibility guarantees. For $t=0$, we have that $g_i(x_0)\leq 0$ for each $i\in[m+1]$ since $x_0\in\cC$. For iteration $t+1$ with $t\geq0$, we consider the following cases.

    $(a)$ When $i\notin I_{x_t}$, we know $g_i(x_t)<0$. Applying the same argument as \eqref{eq:fea-case(i)}, we have
    \begin{equation}
        \begin{split}
            g_i(x_{t+1})
            & <
            \eta_t \nabla g_i(x_{t+1})^\top v_t \\
            & \leq
            \frac{M L_g}{t+1}.
        \end{split}
        \label{eq:pf-fea-<0-onestep}
    \end{equation}
    
    $(b)$ When $i\in I_{x_t}$, we know $g_i(x_t)\geq0$ and $\eta_t\alpha g_i(x_t) + \nabla g_i(x_t)^\top (x_{t+1} - x_t)\leq 0$. By a similar argument to \eqref{eq:fea-case(ii)}, we have that
    \begin{equation}
        \begin{split}
            g_i(x_{t+1})
            & \leq
            (1 - \alpha\eta_t) g_i(x_t) + \frac{\ell_g}{2}\eta_t^2\,\mu^2M^2 \\
            & =
            \left(1 - \frac{c_\gamma}{t+1}\right) g_i(x_t) + \frac{c_M}{(t+1)^2},
        \end{split}
        \label{eq:pf-fea-onestep}
    \end{equation}
    where we let $c_\gamma=(\gamma - 1)/(\gamma + 1)$ and $c_M=\ell_gM^2/2$ for simplicity. Note that $0<c_\gamma<1$ when $\gamma>1$ and $c_\gamma$ approaches 1 in the limit $\gamma\to\infty$. By Lemma 4 in Chapter 2 of \citet{polyak1987introduction}, it is only possible to show that $g_i(x_t)=\cO(1/t^{c_\gamma})$, a rate slower than $\cO(1/t)$ unless $\gamma\to\infty$. We then formally obtain the guarantee on $g_i(x_{t+1})$. Multiplying $(t+2)^{c_\gamma}$ on both sides of \eqref{eq:pf-fea-onestep}, we have that
    \begin{equation}
        \begin{split}
            (t+2)^{c_\gamma} g_i(x_{t+1})
            & \leq
            \left(1 - \frac{c_\gamma}{t+1}\right) (t+2)^{c_\gamma} g_i(x_t) + \frac{c_M (t+2)^{c_\gamma}}{(t+1)^2} \\
            & =
            \left(1 - \frac{c_\gamma}{t+1}\right) \left(1 + \frac{1}{t+1}\right)^{c_\gamma} (t+1)^{c_\gamma} g_i(x_t) + \frac{c_M}{(t+1)^{2-c_\gamma}}\left(1 + \frac{1}{t+1}\right)^{c_\gamma} \\
            & \leq
            \left(1 - \frac{c_\gamma}{t+1}\right) \left(1 + \frac{c_\gamma}{t+1}\right) (t+1)^{c_\gamma} g_i(x_t) + \frac{c_M}{(t+1)^{2-c_\gamma}}\left(1 + \frac{c_\gamma}{t+1}\right) \\
            & \leq
            (t+1)^{c_\gamma} g_i(x_t) + \frac{2\,c_M}{(t+1)^{2-c_\gamma}},
        \end{split}
        \label{eq:pf-fea-onestep-trans}
    \end{equation}
    where we use $g_i(x_t)\geq0$ and the mean value theorem for the function $x^{c_\gamma}$ on the interval $(1, 1+1/(t+1))$ such that $(1+1/(t+1))^{c_\gamma}\leq 1+c_\gamma/(t+1)$ since $0<c_\gamma<1$. Let $\kappa(t)=\max\{0\leq s<t \,|\, g_i(x_s) \leq 0\}$, i.e., the last iterate that is feasible for constraint $i$. Note that $\kappa(t)\geq0$ must exist for every $t\geq1$ given that $g_i(x_0)\leq0$. Solving the recursion in \eqref{eq:pf-fea-onestep-trans}, we have that
    \begin{align*}
        (t+2)^{c_\gamma} g_i(x_{t+1})
        & \leq
        (\kappa(t+1)+2)^{c_\gamma} g_i(x_{\kappa(t+1)+1}) + \sum_{s=\kappa(t+1)+1}^t \frac{2\,c_M}{(s+1)^{2-c_\gamma}} \\
        & \leq
        (\kappa(t+1)+2)^{c_\gamma} g_i(x_{\kappa(t+1)+1}) + \sum_{n=1}^\infty \frac{2\,c_M}{n^{2-c_\gamma}} \\
        & =
        (\kappa(t+1)+2)^{c_\gamma} g_i(x_{\kappa(t+1)+1}) + 2\,c_M\,\zeta(2-c_\gamma),
    \end{align*}
    where we let $\sum_{\kappa(t+1)+1}^t=0$ if $\kappa(t+1)=t$. The series $\sum_{n=1}^\infty 1/n^{2-c_\gamma}$ converges and is finite since $2-c_\gamma>1$, and $\zeta(p)=\sum_{n=1}^\infty 1/n^p$ is the Riemann zeta function. Since $g_i(x_{\kappa(t+1)})\leq 0$, applying either \eqref{eq:pf-fea-<0-onestep} for the strictly feasible case or \eqref{eq:pf-fea-onestep} otherwise, we conclude
    \begin{align*}
        (\kappa(t+1)+2)^{c_\gamma} g_i(x_{\kappa(t+1)+1})
        & \leq
        (\kappa(t+1)+2)^{c_\gamma} \max\left\{\frac{M L_g}{\kappa(t+1)+1}, \frac{c_M}{(\kappa(t+1) + 1)^2}\right\} \\
        & =
        \left(1 + \frac{1}{\kappa(t+1) + 1}\right)^{c_\gamma} \max\left\{\frac{M L_g}{(\kappa(t+1)+1)^{1-c_\gamma}}, \frac{c_M}{(\kappa(t+1) + 1)^{2-c_\gamma}}\right\} \\
        & \leq
        2 \max\left\{\frac{M L_g}{(\kappa(t+1)+1)^{1-c_\gamma}}, \frac{c_M}{(\kappa(t+1) + 1)^{2-c_\gamma}}\right\} \\
        & \leq
        2\max\{M L_g, c_M\}.
    \end{align*}
    This implies that
    \begin{equation*}
        g_i(x_{t+1}) \leq \frac{2\max\{M L_g, c_M\} + 2\,c_M\,\zeta(2-c_\gamma)}{(t+2)^{c_\gamma}},
    \end{equation*}
    which is also a valid upper bound of \eqref{eq:pf-fea-<0-onestep} since $c_\gamma<1$.
    
    Given the upper bound on each $g_i(x_t)$, by convexity of $g_i(x)$, we further have that
    \begin{align*}
        g_i(\bar x_T)
        & \leq
        \frac{2}{T(T-1)} \sum_{t=1}^{T-1} t\, g_i(x_t) \\
        & \leq
        \frac{4\max\{M L_g, c_M\} + 4\,c_M\,\zeta(2-c_\gamma)}{T(T-1)} \sum_{t=1}^{T-1} \frac{t}{(t+1)^{c_\gamma}} \\
        & \leq
        \frac{4\max\{M L_g, c_M\} + 4\,c_M\,\zeta(2-c_\gamma)}{T(T-1)} \sum_{t=1}^{T} t^{1-c_\gamma} \\
        & \leq
        \frac{4\max\{M L_g, c_M\} + 4\,c_M\,\zeta(2-c_\gamma)}{(T-1)} \left(\frac{T+1}{2}\right)^{1-c_\gamma} \\
        & \leq
        \frac{12\max\{M L_g, c_M\} + 12\,c_M\,\zeta(2-c_\gamma)}{(T+1)^{c_\gamma}},
    \end{align*}
    where we use $T\geq2$ and Jensen's inequality on the concave function $x^{1-c_\gamma}$.
\end{proof}

In the strongly-monotone setting, the convergence rate of CGM in terms of the optimality gap is $\cO(1/T)$, which matches the rate of projection-based methods \citep{nesterov2013introductory}. However, it is worth noting that the rate in terms of the constraint violation approaches $\cO(1/T)$ only  as $\gamma$ grows large enough. We leave the task of closing this gap to future work. As a comparison, the primal-dual method in \citet{yang2023solving} achieves $\cO(1/\sqrt{T})$ rate measured by the distance $\norm{\bar x_T - x^*}$.
If the operator is additionally Lipschitz, the same $\cO(1/\sqrt{T})$ rate is attained for a strong approximate solution.

\subsection{The Special Case with Convex Minimization}

In the following, we provide a special instance of the strongly-monotone variational inequality problems such that an $\cO(1/T)$ convergence rate on both the optimality gap and constraint violations can be attained through a different strategy of setting the stepsizes. To be specific, we consider the strongly-convex and smooth minimization problem with functional constraints, that is,
\begin{align*}
    & \min_{x\in\bR^d} \, f(x), \\
    & \text{s.t. } \quad g_i(x) \leq 0, \, \forall i\in[m].
\end{align*}
This is equivalent to the variational inequality problem associated with the operator $F(x):=\nabla f(x)$ and the feasible set $\cC=\{x\in\bR^d | g_i(x)\leq 0, \forall i\in[m]\}$. The guarantees of CGM in this setting are as follows.
Note that the guarantees in this case apply to the last iterate $x_T$.

\begin{theorem}
    Suppose $f(x)$ is $\mu$--strongly-convex, $\ell_f$-smooth, and $L_f$-Lipschitz, and  $g_i(x)$ is convex, $\ell_g$-smooth, and $L_g$-Lipschitz for each $i\in[m]$ such that the feasible set $\cC$ is contained in an Euclidean ball of radius $D$. Let $T$ be large enough such that $T\geq\max\{3,\kappa_f\}(\log T)$, where $\kappa_f:=\ell_f/\mu$ denotes the condition number.  Then CGM with $\epsilon=0$ for simplicity, stepsize $\eta=(\log T)/(\mu T)$, and $\alpha=\mu$ satisfies that
    \begin{equation*}
        f(x_T) - f(x^*) \leq \frac{f(x_0) - f(x^*)}{T},
    \end{equation*}
    where $x^*=\arg\min_{x\in\cC} f(x)$.
    For the constraint violation, we have that for every iteration $t$ and for each $i\in[m+1]$,
    \begin{equation*}
        g_i(x_t) \leq\frac{M\max\{2L_g, \ell_gM\}\log T}{2T},
    \end{equation*}
    where $M:=3(D+4L_f/\mu)$. 
    \label{thm:primal}
\end{theorem}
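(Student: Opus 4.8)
The plan is to exploit the gradient structure $F=\nabla f$ together with strong convexity to upgrade the averaged guarantee of Theorem \ref{thm:convex} into a genuine per-step contraction on the function values, which is what produces a last-iterate rate. First I would write the $\ell_f$-smoothness descent inequality $f(x_{t+1}) \le f(x_t) + \eta\,\nabla f(x_t)^\top v_t + (\ell_f\eta^2/2)\norm{v_t}^2$. The core step is to feed a carefully chosen feasible test direction into the optimality condition of the quadratic program: since $\epsilon=0$, the solver gives $(v_t + \nabla f(x_t))^\top(v_t - v)\le 0$ for all $v\in V_\alpha(x_t)$, and I would use $w := \alpha(x^*-x_t) = \mu(x^*-x_t)$. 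One checks $w\in V_\alpha(x_t)$: for every active $i\in I_{x_t}$, convexity of $g_i$ and $g_i(x^*)\le 0$ (which also holds for the auxiliary constraint $g_{m+1}$ because $\norm{x^*}\le D$) give $\alpha g_i(x_t) + \nabla g_i(x_t)^\top w = \mu[g_i(x_t) + \nabla g_i(x_t)^\top(x^*-x_t)]\le \mu g_i(x^*)\le 0$.

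Plugging $v=w$ into the optimality condition bounds $\nabla f(x_t)^\top v_t \le -\norm{v_t}^2 + \mu\,v_t^\top(x^*-x_t) + \mu\,\nabla f(x_t)^\top(x^*-x_t)$, and strong convexity replaces the last term by $\mu[f(x^*)-f(x_t) - (\mu/2)\norm{x^*-x_t}^2]$. Substituting into the descent inequality yields $f(x_{t+1}) - f(x^*) \le (1-\mu\eta)(f(x_t)-f(x^*)) + R_t$, where $R_t = \eta[-(1-\ell_f\eta/2)\norm{v_t}^2 + \mu\,v_t^\top(x^*-x_t) - (\mu^2/2)\norm{x^*-x_t}^2]$. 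A single Young's inequality $\mu\,v_t^\top(x^*-x_t)\le (1-\ell_f\eta/2)\norm{v_t}^2 + \frac{\mu^2}{4(1-\ell_f\eta/2)}\norm{x^*-x_t}^2$ shows $R_t\le 0$ precisely when $1-\ell_f\eta/2\ge 1/2$, i.e.\ $\ell_f\eta\le 1$; this is ensured by $T\ge\kappa_f\log T$, since then $\ell_f\eta = \kappa_f(\log T)/T\le 1$. I would then unroll the contraction and use $(1-\mu\eta)^T\le e^{-\mu\eta T} = e^{-\log T} = 1/T$ (with $\mu\eta = (\log T)/T$) to conclude $f(x_T)-f(x^*)\le (f(x_0)-f(x^*))/T$.

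For the constraint-violation bound I would reuse the two-case induction from the proof of Theorem \ref{thm:convex}, adapted to the constant stepsize $\eta = (\log T)/(\mu T)$. The velocity bound comes from Lemma \ref{lm:bounded} with $\gamma=2$: since $T\ge 3\log T$ forces $\alpha\eta = \mu\eta = (\log T)/T\le 1/3 = (\gamma-1)/(\gamma+1)$, the lemma gives $\norm{v_t}^2\le 3\mu^2D^2 + 6\mu^2(D+4L_f/\mu)^2 \le 9\mu^2(D+4L_f/\mu)^2 = \mu^2M^2$. Writing $B := M\max\{2L_g,\ell_gM\}(\log T)/(2T)$, the base case $g_i(x_0)\le 0\le B$ holds since $x_0\in\cC$. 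For an inactive constraint ($i\notin I_{x_t}$) the argument of \eqref{eq:fea-case(i)} gives $g_i(x_{t+1}) < \eta L_g\norm{v_t}\le ML_g(\log T)/T\le B$; for an active constraint ($i\in I_{x_t}$) the argument of \eqref{eq:fea-case(ii)} gives $g_i(x_{t+1})\le (1-\mu\eta)g_i(x_t) + (\ell_g/2)\eta^2\mu^2M^2$, and feeding in the inductive hypothesis $g_i(x_t)\le B$ reduces the claim $g_i(x_{t+1})\le B$ to $\ell_gM\le\max\{2L_g,\ell_gM\}$, which is immediate. Since the guarantee holds at every iterate, it holds in particular at $x_T$.

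The main obstacle is the second step: identifying the test direction $w=\alpha(x^*-x_t)$ that simultaneously lies in the velocity polytope (coupling the constraint geometry through convexity) and, via strong convexity, manufactures the clean contraction factor $(1-\mu\eta)$. Everything else — the descent lemma, the Young's inequality that absorbs the cross term under $\ell_f\eta\le 1$, and the feasibility induction — is routine once this direction is in hand. I would expect the only delicate bookkeeping to be tracking the two separate requirements $T\ge\kappa_f\log T$ (for the contraction) and $T\ge 3\log T$ (for the velocity bound via $\gamma=2$), both of which are subsumed by the hypothesis $T\ge\max\{3,\kappa_f\}\log T$.
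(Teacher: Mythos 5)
Your proposal is correct and takes essentially the same approach as the paper: the same key test direction $\alpha(x^*-x_t)\in V_\alpha(x_t)$ (valid for the auxiliary constraint too), the same $(1-\mu\eta)$ contraction unrolled via $(1-\mu\eta)^T\leq e^{-\mu\eta T}=1/T$, and the same two-case feasibility induction built on Lemma \ref{lm:bounded} with $\gamma=2$ and the bound $\norm{v_t}\leq\mu M$. The only (cosmetic) difference is that the paper uses the norm-minimality $\norm{v_t+\nabla f(x_t)}^2\leq\norm{\alpha(x^*-x_t)+\nabla f(x_t)}^2$ so the $\norm{x^*-x_t}^2$ terms cancel exactly at $\alpha=\mu$, while you use the variational-inequality form of the QP optimality plus a Young's inequality, reaching the identical contraction under the same condition $\ell_f\eta\leq 1$.
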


\begin{proof}
    We first show the guarantees on the objective function. For the velocity polytope $V_\alpha(x_t)$, we have that $\forall x\in\cC$ such that $g_i(x) \leq 0$,
    \begin{align*}
        \alpha g_i(x_t) + \alpha\nabla g_i(x_t)^\top (x - x_t)
        & \leq
        \alpha g_i(x) 
         \leq
        0,
    \end{align*}
    since each $g_i(x)$ is convex and $\alpha>0$. This implies that $\alpha(x - x_t)\in V_\alpha(x_t)$ for any $x\in\cC$. By the computation of $v_t$, we then have that $\norm{v_t + \nabla f(x_t)}^2\leq\norm{\alpha(x^*-x_t) + \nabla f(x_t)}^2$ for $x^*=\arg\min_{x\in\cC} f(x)$. Using smoothness of $f(x)$, we conclude
    \begin{align*}
        f(x_{t+1})
        & \leq
        f(x_t) + \nabla f(x_t)^\top(x_{t+1} - x_t) + \frac{\ell_f}{2}\norm{x_{t+1} - x_t}^2 \\
        & =
        f(x_t) + \eta\nabla f(x_t)^\top v_t + \frac{\ell_f}{2}\eta^2\norm{v_t}^2 \\
        & =
        f(x_t) + \frac{\eta}{2}\norm{v_t + \nabla f(x_t)}^2 - \frac{\eta}{2}\norm{\nabla f(x_t)}^2 - \frac{\eta}{2}(1-\eta\ell_f)\norm{v_t}^2 \\
        & \leq
        f(x_t) + \frac{\eta}{2}\norm{\alpha(x^* - x_t) + \nabla f(x_t)}^2 - \frac{\eta}{2}\norm{\nabla f(x_t)}^2 - \frac{\eta}{2}(1-\eta\ell_f)\norm{v_t}^2 \\
        & =
        f(x_t) + \frac{\eta}{2}\alpha^2\norm{x^* - x_t}^2 + \alpha\eta\nabla f(x_t)^\top(x^* - x_t) - \frac{\eta}{2}(1-\eta\ell_f)\norm{v_t}^2 \\
        & \leq
        f(x_t) - \alpha\eta (f(x_t) - f(x^*)) + \frac{\eta\alpha}{2}(\alpha - \mu)\norm{x^* - x_t}^2 - \frac{\eta}{2}(1-\eta\ell_f)\norm{v_t}^2,
    \end{align*}
    where we use the assumption that $f(x)$ is $\mu$--strongly-convex. When setting $\alpha \leq \mu$, we obtain
    \begin{equation*}
        f(x_{t+1}) - f(x^*) \leq (1 - \alpha\eta) (f(x_t) - f(x^*)) - \frac{\eta}{2}(1-\eta\ell_f)\norm{v_t}^2.
    \end{equation*}
    When $\eta\leq1/\ell_f$, we have that $1-\eta\ell_f\geq 0$ and $0<\alpha\eta\leq\mu/\ell_f\leq 1$, and thus
    \begin{align*}
        f(x_T) - f(x^*)
        & \leq
        (1 - \alpha\eta)^T (f(x_0) - f(x^*)) \\
        & \leq
        \exp(-\alpha\eta T) (f(x_0) - f(x^*)) \\
        & =
        \frac{f(x_0) - f(x^*)}{T},
    \end{align*}
    if $\eta=(\log T)/(\mu T)$ and $\alpha=\mu$.

    We then show the guarantees on the constraint violations by induction. Applying Lemma \ref{lm:bounded} with $\gamma=2$, $\eta=(\log T)/(\mu T)$ and $\alpha=\mu$, this gives that for every $t$, $\norm{v_t}^2\leq3\mu^2D^2 + 6\mu^2(D+4L_f/\mu)^2\leq\mu^2 M^2$. The base case is true since $x_0\in\cC$. Assuming the claim holds for some $k\geq0$, we prove that the same is true for $k+1$. For each $i\notin I_{x_k}$, applying the same analysis as \eqref{eq:fea-case(i)}, we have that
    \begin{align*}
        g_i(x_{k+1})
        & \leq
        \eta \nabla g_i(x_{k+1})^\top v_k \\
        & \leq
        \frac{L_g M\log T}{T}.
    \end{align*}
    For each $i\in I_{x_t}$, we apply the same argument as in \eqref{eq:fea-case(ii)}, which yields
    \begin{align*}
        g_i(x_{k+1})
        & \leq
        (1 - \alpha\eta) g_i(x_k) + \frac{\ell_g}{2}\eta^2\norm{v_k}^2 \\
        & \leq
        \frac{M\max\{2L_g, \ell_gM\}\log T}{2T} + \frac{\ell_g M^2 (\log T)^2}{2T^2} - \frac{M\max\{2L_g, \ell_gM\}(\log T)^2}{2T^2} \\
        & \leq
        \frac{M\max\{2L_g, \ell_gM\}\log T}{2T}.
    \end{align*}
    For both cases, we show that the claim holds for $k+1$, which completes the proof. Note that we also assume that $T$ is large enough such that $T\geq \max\{\kappa_f, 3\}(\log T)$ to ensure that $\eta\leq1/\ell_f$ and $\alpha\eta\leq1/3$ as required in the convergence analysis.
\end{proof}

We point out that the original analysis of CGM for strongly-convex and smooth minimization problems in \citet{muehlebach2022constraints} achieves $\cO(1/T)$ rate on the distance $\min_{0\leq t\leq T} \norm{x_t - x^*}^2$. However, they require the stepsize $\eta\leq2/(\ell^* + \mu)$, where $\ell^*$ is the smoothness parameter of the Lagrangian function $f(x)+\sum_{i\in[m]}\lambda^*_i g_i(x)$ and has explicit dependence on the optimal multipliers $\lambda^*_i$. Instead, we provide an improved analysis in Theorem \ref{thm:primal} where no prior knowledge on the optimal Lagrange multipliers is required.

\section{Solving the Quadratic Program in CGM} \label{sec:qp}

This section summarizes different approaches to solving the quadratic program in CGM (Algorithm \ref{algo:cgd-vip}), which determines the update direction. Notably, there are important examples where the solutions to the quadratic program can be derived in closed form, including the case involving only one constraint function and simplex constraints. Additional examples, including nonconvex constraints and constraints arising in optimal transport, can be found in \citet{ibrahim2023} and \citet{schechtman2023Stochastic}. The availability of closed-form solutions to the quadratic programs in CGM enables direct implementations that are as computationally efficient as unconstrained gradient methods.

\subsection{Single Constraint Function}

Let us consider the variational inequality problems \eqref{eq:vip} where the feasible set is described by a single constraint function $g(x)$:
\begin{equation*}
    \cC=\{x\in\bR^d \,|\, g(x)\leq 0\}.
\end{equation*}
The quadratic program in CGM reduces to a single linear constraint when $g(x)$ is active, and closed-form solutions can be achieved by solving the KKT system (details omitted). The resulting algorithm is summarized in Algorithm \ref{algo:cgm-1}.

\begin{algorithm}
    \caption{Constrained Gradient Method with One Constraint}
    \begin{algorithmic}[1]
        \Require Initialization $x_0$, stepsize $\eta>0$, parameter $\alpha>0$.
        \For{$t=0,1,\cdots,T-1$}
            \If{$g(x_t)\geq0$ and $\alpha g(x_t) - \nabla g(x_t)^\top F(x_t)\geq0$}
                \State $\lambda = \norm{\nabla g(x_t)}^{-2}(\alpha g(x_t) - \nabla g(x_t)^\top F(x_t))$.
            \Else
                \State $\lambda=0$.
            \EndIf
            \State $x_{t+1} = x_t - \eta F(x_t) - \eta\lambda\, \nabla g(x_t)$.
        \EndFor
        \Ensure $\bar x_T = (1/T)\sum_{t=0}^{T-1} x_t$.
    \end{algorithmic}
    \label{algo:cgm-1}
\end{algorithm}

Algorithm \ref{algo:cgm-1} has direct update steps that are easy to implement.  Interestingly, it coincides with the dynamical barrier approach proposed separately by \citet{gong2021automatic} for constrained minimization problems.

\subsection{The Simplex Constraints}

Although the simplex involves multiple constraints, the quadratic program in CGM has closed-form solutions. This allows CGM to perform direct update steps that can be computed in at most $\cO(d\log d)$ steps. As detailed in Appendix \ref{app:sp}, given an index set $N\subseteq[d]$ and a vector $q\in\bR^{d}$, the problem reduces to solving the quadratic program
\begin{align*}
    & \min_{p\in \bR^{d}} \, \frac{1}{2}\norm{p-q}^2, \quad \text{s.t. } \quad \sum_{i=1}^{d} p_i=1,  \quad p_i\geq 0, \forall i\in N,
\end{align*}
where $p_i$ denotes the $i$-th coordinate of $p$. This is similar to the projection problem onto the simplex, but we only restrict the coordinates in the set $N$ to be non-negative. In Appendix \ref{app:sp}, we develop a method proj$_v(q, N)$, summarized in Algorithm \ref{algo:proj-v}, analogous to the projection oracle onto the simplex \citep{duchi2008efficient} for solving the above problem.

\begin{algorithm}
    \caption{Velocity Projection Oracle (proj$_v$)}
    \begin{algorithmic}[1]
        \Require Vector $q\in\bR^{d}$, index set $N\subseteq[d]$ with size $0\leq n\leq d$.
        \State Compute $s_{\bar N} = \sum_{i=1, i\notin N}^{d} q_i$.
        \State Sort $\{q_i \,|\, i\in N\}$ into $r_1\geq r_2\geq \cdots \geq r_n$.
        \State Construct the set $J = \big\{1\leq j \leq n \,\big|\, r_j + \frac{1}{d-n+j}\big(1 - s_{\bar N} - \sum_{i=1}^j r_i\big) > 0\big\}$.
        \If{$J=\emptyset$}
            \State $\lambda=\frac{1}{d-n}(1-s_{\bar N})$.
        \Else
            \State Let $\rho=\max J$.
            \State $\lambda = \frac{1}{d-n+\rho} \left(1 - s_{\bar N} - \sum_{i=1}^\rho r_i\right)$.
        \EndIf
        \If{$i\in N$}
            \State $p_i = \max\{0, q_i + \lambda\}$.
        \Else
            \State $p_i = q_i + \lambda$.
        \EndIf
        \Ensure $p$.
    \end{algorithmic}
    \label{algo:proj-v}
\end{algorithm}

\begin{algorithm}
    \caption{Constrained Gradient Method with Simplex Constraint}
    \begin{algorithmic}[1]
        \Require Initialization $x_0$, stepsize $\eta>0$, parameter $\alpha>0$.
        \For{$t=0,1,\cdots,T-1$}
            \State $N_t = \{i\in[d] \,|\, x_{t,i}\leq 0\}$.
            \State $x_{t+1} = (1 - \alpha\eta) x_t + \alpha\eta\; \text{proj}_v\left(x_t - \frac{1}{\alpha} F(x_t), N_t\right)$.
        \EndFor
        \Ensure $\bar x_T = (1/T)\sum_{t=0}^{T-1} x_t$.
    \end{algorithmic}
    \label{algo:cgm-sp}
\end{algorithm}

The resulting algorithm implementing CGM is provided in Algorithm \ref{algo:cgm-sp}, where $x_{t,i}$ denotes the $i$-th coordinate of the iterate $x_t$. Compared to the projection-based methods that require sorting a vector of dimension $d$ \citep{duchi2008efficient}, Algorithm \ref{algo:cgm-sp} is provably more efficient when the velocity polytope is sparse, as the procedure in Algorithm \ref{algo:proj-v} only sorts a vector of dimension $n\leq d$. This manifests potential benefit of CGM even when the projection oracle is computable.

\subsection{The General Case}

For general problems with multiple constraints $\cC=\{x\in\bR^d \,|\, g_i(x)\leq 0, \forall i\in[m]\}$, closed-form updates of CGM may not always exist.
A simple solution, akin to how Polyak's switching gradient method \citep{polyak1967general, huang2023oracle} handles multiple constraints, is to replace $\cC$ with $\{x\in\bR^d \,|\, g(x)\leq 0\}$ for $g(x)=\max_{i\in[m]} g_i(x)$.
As our algorithm requires the constraint function to be smooth, a better choice is to use the log-sum-exp function $g(x)=\log(\sum_{i\in[m]} \exp(g_i(x)))$ as a smooth approximation to the maximum.
Algorithm \ref{algo:cgm-1} can be applied since now there is only one constraint function involved.
This strategy is not commonly preferred, as it overlooks the structure of the feasible set and the velocity polytope. Various methods \citep{muehlebach2022constraints, nishihara2015general} can then be applied to actually solve the quadratic program; see Section 7 in \citet{muehlebach2022constraints} for discussions. For example, the Frank-Wolfe method achieves the required guarantees in Algorithm \ref{algo:cgd-vip} with $\cO(1/\epsilon)$ calls to a linear programming solver \citep{jaggi2013revisiting}.

\section{Numerical Experiments} \label{sec:exp}

Numerical experiments on minimax problems with quadratic or simplex constraints are provided to evaluate the effectiveness of CGM. For all settings, we implement CGM with direct updates as discussed in Section \ref{sec:qp}.

\subsection{2D Examples with Ellipse Constraints}

We first provide the following two 2D examples with minimax objectives and ellipse constraints to illustrate the trajectory of our algorithm: the Forsaken game \citep{hsieh2021limits} and a toy GAN \citep{daskalakis2018training}. Both problems do not have constraints in their original formulation, and we add an ellipse constraint to test our algorithm. To be specific, the forsaken game has the objective
\begin{align*}
    & \min_{x\in\bR} \max_{y\in\bR} \; x(y - 0.45) + h(x) - h(y), \\
    & \text{s.t.} \quad x^2 + 4y^2 \leq 1,
\end{align*}
where $h(x)=x^2/4-x^4/2+x^6/6$. The toy GAN is constructed such that the generator tries to learn the unknown variance of data sampled from a Gaussian distribution, that is,
\begin{align*}
    & \min_{x\in\bR} \max_{y\in\bR} \; \bE_{u_1\sim \cN(0,1)} \left[y u_1^2\right] - \bE_{u_2\sim \cN(0,1)}\left[y x^2 u_2^2\right], \\
    & \text{s.t.} \quad x^2 + 4y^2 \leq 1.
\end{align*}
Both examples are known to exhibit limit cycles, posing challenges in the computation of equilibria. The projection operation onto general ellipsoid constraints does not have simple solutions, and previous methods rely on an iterative procedure for an approximation \citep{dai2006fast}. However, since the ellipse constraint only involves one constraint function, our algorithm has a simple update rule as summarized in Algorithm \ref{algo:cgm-1}.

In Figure \ref{fig:2d}, we plot the trajectory of CGM (Algorithm \ref{algo:cgm-1}) on the 2D examples. For both examples, we fix the stepsize $\eta$ to be 0.1, the number of iterations to be 64, and vary the parameter $\alpha$ as presented in the figure. The optimal solution is inside the feasible set for the Forsaken game, and exactly on the boundary for the toy GAN. For the toy GAN, we use 1000 independent samples from a zero-mean Gaussian random variable with unit variance for both $u_1$ and $u_2$ to estimate the gradients at each iteration. The figure illustrates that CGM exhibits different behaviors as $\alpha$ changes: a larger $\alpha$ results in more aggressive updates towards minimizing constraint violations, whereas a smaller $\alpha$ leads to trajectories that are predominantely guided by the objective function. CGM demonstrates the ability to approach the feasible set and attain convergence to the optimal solution, even when initiated from infeasible points.

\begin{figure}
    \centering
    \includegraphics[width=\textwidth]{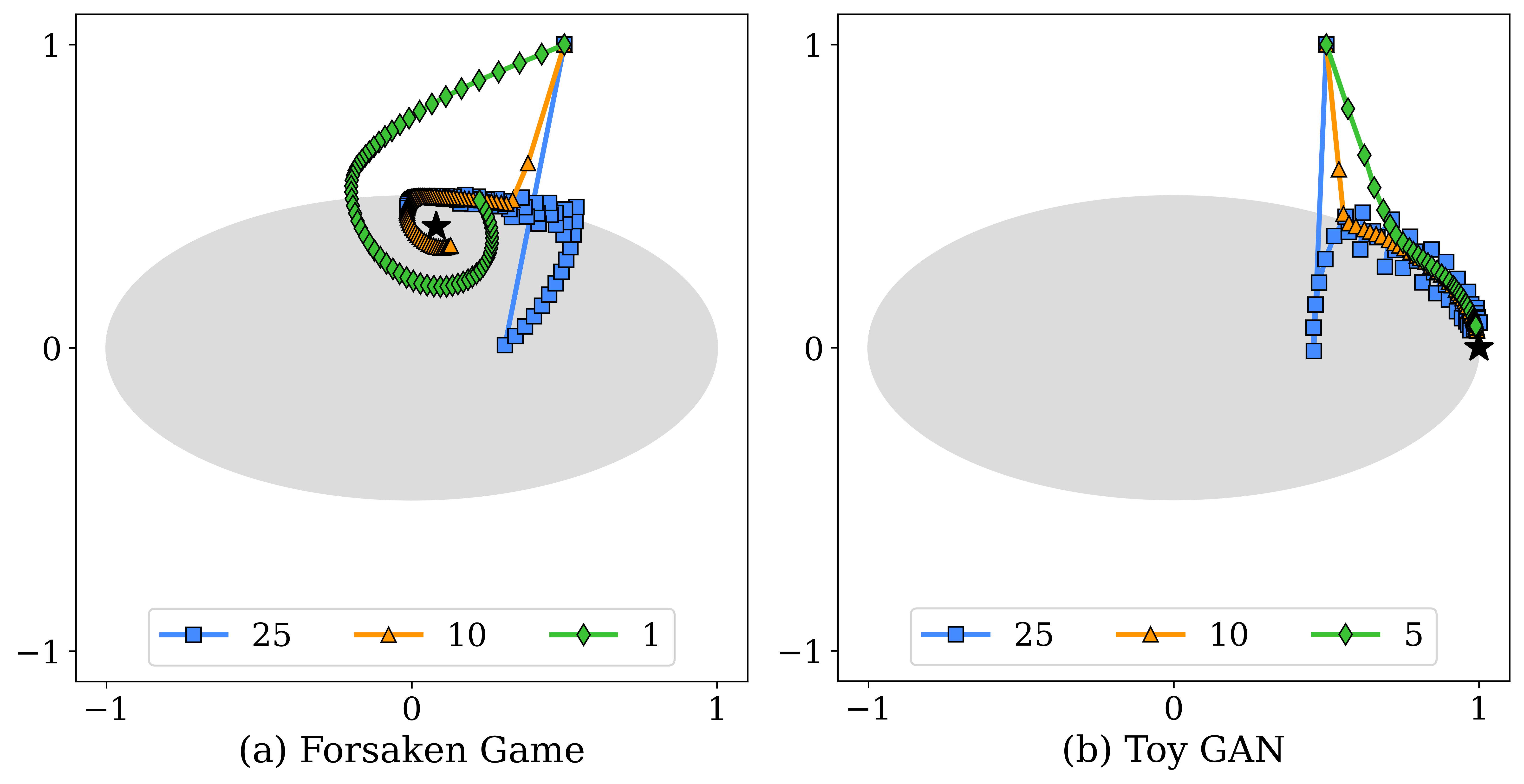}
    \caption{The trajectory of CGM on two 2D examples with varying $\alpha$. Each marker denotes the iterate at each iteration. The shaded area represents the feasible set and the black star denotes the optimal solution. The initialization point is (0.5,1) for both examples.}
    \label{fig:2d}
\end{figure}

\subsection{Matrix Game with Quadratic Constraints}

We then turn to problems with higher dimensions. The first example is the matrix game with a quadratic constraint, i.e.,
\begin{align*}
    & \min_{x\in\bR^d} \max_{y\in\bR^d} \; (x-a)^\top A y, \\
    & \text{s.t.} \quad \frac{1}{2} z^\top B z \leq c,
\end{align*}
where $z=(x,y)\in\bR^{2d}$ is the concatenation of $x$ and $y$. The problem is equivalent to the constrained variational inequality problem associated with $F(z)=(Ay, -A^\top (x-a))$ and $\cC=\{z\in\bR^{2d} \,|\, (1/2)z^\top B z \leq c\}$. Here, $A\in\bR^{d\times d}$ is a random matrix with each entry sampled from $\cN(0,1)$, $a\in\bR^d$ is a vector with each entry sampled from $\cN(0,0.1)$, $B\in\bR^{2d\times 2d}$ is a random positive definite matrix with each eigenvalue uniformly sampled from $[0.1,10]$, and $c>0$ is sampled uniformly from $[0.1,10]$. For any point $\hat z=(\hat x,\hat y)$, its optimality is measured by the (strong) gap $\max_{z\in\cC} F(\hat z)^\top (\hat z - z)$, effectively the same as $-\min_{z\in\cC} z^\top F(\hat z) = (2c\, F(\hat z)^\top B^{-1} F(\hat z))^{1/2}$ through solving the corresponding KKT system, and its feasibility is measured by the constraint violation $\max\{0, (1/2)\hat z^\top B \hat z - c\}$.

Although the constraint function has a simple quadratic form, it is challenging to derive the projection oracle. Its specific form depends on the spectrum of $B$ and often relies on matrix inversion and decomposition operations that are computationally expensive, except if $B$ is diagonal. Existing methods therefore rely on an iterative procedure to derive the projection oracle, see e.g., \citet{dai2006fast}. However, as there is only one constraint function, an efficient implementation of CGM exists; see Algorithm \ref{algo:cgm-1}.
Figure \ref{fig:quad} shows the performance of CGM (Algorithm \ref{algo:cgm-1}). The experiments are conducted with a dimension of $d=1000$, an iteration number of $T=1000$, stepsize $\eta=0.01$, $\alpha=50$, and a random initialization point with each coordinate sampled from the Gaussian $\cN(0,1)$.
CGM is able to reduce both the optimality gap and feasibility without knowing the projection oracle, as indicated by our theoretical analysis.

\begin{figure}
    \centering
    \includegraphics[width=\textwidth]{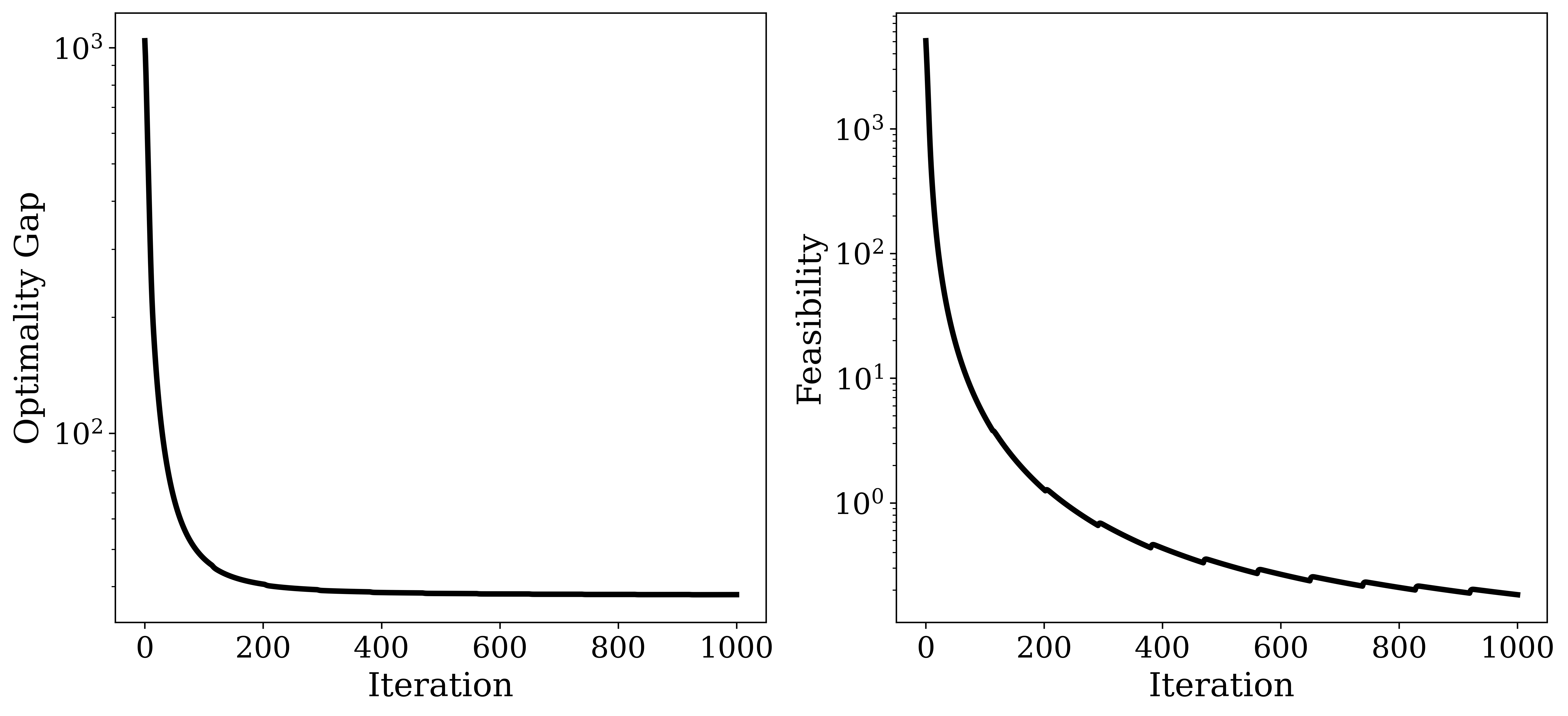}
    \caption{The optimality gap and feasibility of CGM on the matrix game with the quadratic constraint. Both are measured on the average iterates of the algorithm.}
    \label{fig:quad}
\end{figure}

\subsection{Matrix Game with Simplex Constraints}

\begin{figure}
    \centering
    \includegraphics[width=\textwidth]{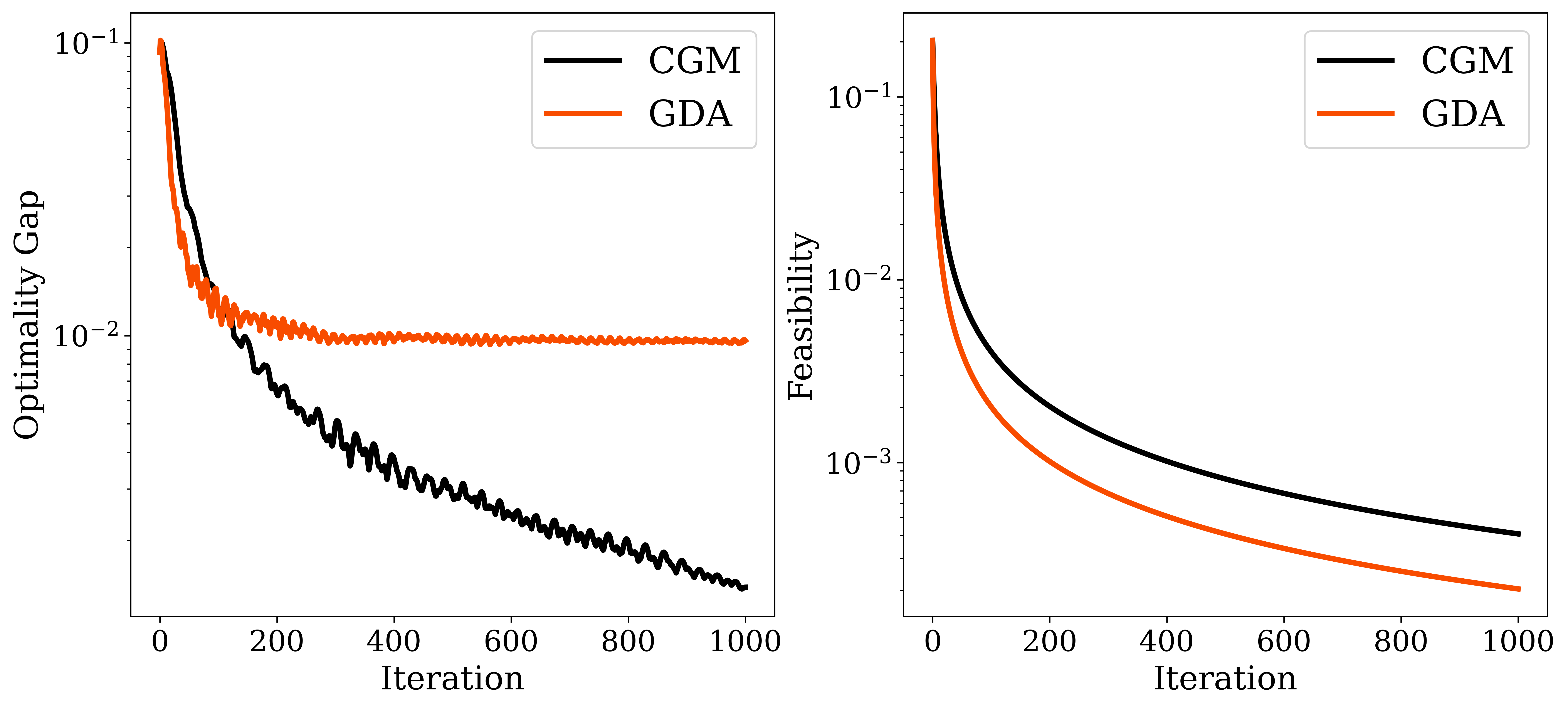}
    \caption{The optimality gap and feasibility of CGM (Algorithm \ref{algo:cgm-sp}) and projected gradient descent ascent (GDA) on the matrix game with the simplex constraint. Both are measured on the average iterates of the algorithm.}
    \label{fig:simplex}
\end{figure}

The last example we explore is a matrix game with simplex constraints. This gives rise to
\begin{align*}
    & \min_{x\in\bR^d} \max_{y\in\bR^d} \; x^\top A y, \\
    & \text{s.t.} \quad \sum_{i=1}^d x_i + \sum_{i=1}^d y_i = 1, \\
    & \phantom{\text{s.t.} \quad} x_i \geq 0, \, y_i \geq 0, \; \forall i\in[d],
\end{align*}
where $A\in\bR^{d\times d}$ is a random matrix with each entry sampled from $\cN(0,1)$. Let $z=(x,y)\in\bR^{2d}$. The problem is equivalent to the constrained variational inequality problem associated with the operator $F(z)=(Ay, -A^\top x)$ and the feasible set $\cC=\{z\in\bR^{2d} \,|\, \sum_{i=1}^{2d} z_i=1, z_i\geq 0, \forall i\in[2d]\}$.
For any point $\hat z=(\hat x,\hat y)$, its optimality is measured by the (strong) gap $\max_{z\in\cC} F(\hat z)^\top (\hat z - z) \leq \abs{\max (A^\top \hat x)_i} + \abs{\min (A \hat y)_i}$, and its feasibility is measured by the constraint violation $\max\{0, -\hat z_i, \abs{\sum_{i=1}^{2d} z_i-1}\}$.

Figure \ref{fig:simplex} shows the performance of CGM (Algorithm \ref{algo:cgm-sp}) and compares with projection-based method gradient descent ascent (GDA), which suggests that both the optimality gap and feasibility can be effectively reduced by CGM.
In the experiments for both CGM and GDA, we set the dimension $d=1000$, the number of iterations $T=1000$, stepsize $\eta=0.005$, and choose a random initialization point with each coordinate sampled from the Gaussian $\cN(0,1)$. The parameter $\alpha$ is set to be 100 for CGM. Since we start with an initial point that may be infeasible, the average iterates of GDA may not be feasible as well, even with projections applied at each iteration.

\section{Conclusion} \label{sec:conclude}

We present a primal method for solving monotone variational inequality problems with general functional constraints. Our algorithm achieves the same complexity on querying $F$ as projection-based methods and does not rely on any information about the optimal Lagrange multipliers unlike existing primal-dual methods.
Several interesting questions have yet to be explored. The current analysis of CGM (Algorithm \ref{algo:cgd-vip}) for the strongly-monotone setting achieves a rate slightly worse than $\cO(1/T)$ on the constraint violation, where $T$ is the number of iterations. It remains unknown whether such guarantees can be improved.
Recently, \citet{zamani2023exact} presented a refined analysis of the projected subgradient method, demonstrating last iterate convergence for nonsmooth convex optimization. Extending our analysis in this direction could be interesting.
An exploration into the monotone and Lipschitz case is also valuable, particularly to determine if the improved complexity $\cO(1/\epsilon)$ can be achieved, as observed in the context of projection-based methods, such as extragradient and optimistic gradient \citep{mokhtari2020convergence, mokhtari2020unified}.
Furthermore, extensions to stochastic settings \citep{juditsky2011solving, alacaoglu2022stochastic, lan2020algorithms} and non-monotone settings \citep{yang2020global, diakonikolas2021efficient} are left for future investigations.

\section*{Acknowledgements}

We thank the Max Planck ETH Center for Learning Systems and the German Research Foundation for the support.

\bibliography{ref}
\bibliographystyle{plainnat}

\newpage
\appendix

\section{Proof of Lemma \ref{lm:bounded}}
\label{app:convex}

\begin{proof}
    Since $\alpha>0$ and $g_i(x)$ is convex for any $i\in[m+1]$, we have that $\forall x\in\cC$,
    \begin{align*}
        \alpha g_i(x_t) + \alpha\nabla g_i(x_t)^\top (x - x_t)
        & \leq
        \alpha g_i(x) \\
        & \leq
        0.
    \end{align*}
    We recall that $V_\alpha(x_t)=\{v\in\bR^d \,|\, \alpha g_i(x_t) + \nabla g_i(x_t)^\top v \leq 0, \forall i\in I_{x_t}\}$. The equation above implies that $\alpha(x-x_t)\in V_\alpha(x_t)$ for any $x\in\cC$. By the guarantee that $(v_t+F(x_t))^\top (v_t - v)\leq\epsilon/2$, $\forall v\in V_\alpha(x_t)$, we know that for any $v\in V_\alpha(x_t)$,
    \begin{align*}
        \frac{1}{2}\norm{v_t + F(x_t)}^2  - \frac{1}{2}\norm{v + F(x_t)}^2
        & =
        (v_t+F(x_t))^\top (v_t - v) - \frac{1}{2}\norm{v_t - v}^2 \\
        & \leq
        \frac{\epsilon}{2}.
    \end{align*}
    Setting $v=\alpha(x-x_t)$ for any $x\in\cC$ and $\epsilon\leq 4L_F^2$, we have that $\forall x\in\cC$,
    \begin{align*}
        \norm{v_t + F(x_t)}
        & \leq
        \norm{\alpha(x-x_t) + F(x_t)} + \sqrt{\epsilon} \\
        & \leq
        \alpha\norm{x_t} + \alpha\norm{x} + \norm{F(x_t)} + 2L_F.
    \end{align*}
    By the assumptions that $\norm{F(x)}\leq L_F$ and $\norm{x}\leq D, \forall x\in\cC$, we have that   
    \begin{equation}
        \begin{split}
            \norm{v_t}
            & \leq
            \norm{v_t + F(x_t)} + \norm{F(x_t)} \\
            & \leq
            \alpha\norm{x_t} + \alpha\norm{x} + 2\norm{F(x_t)} + 2L_F \\
            & \leq
            \alpha\norm{x_t} + \alpha D + 4L_F.
        \end{split}
        \label{eq:x_t-to-v_t}
    \end{equation}
    We prove the bound on $\norm{x_t}^2$ using induction, and the bound on $\norm{v_t}^2$ directly follows from \eqref{eq:x_t-to-v_t}. The base case is true by the initialization $x_0\in\cC$, and thus $\norm{x_0}^2\leq D^2$. Assuming the claim holds for some $k\geq 0$, we now show the same is true for $k+1$. We consider the following two cases: $(a) \, \norm{x_k}^2< D^2$; $(b) \, D^2\leq\norm{x_k}^2\leq \gamma D^2 + \gamma(\gamma - 1)(D+4L_F/\alpha)^2$ for a given $\gamma>1$.
    
    For case $(a)$, we have that $\norm{v_k}< 2\alpha D + 4L_F$. Using $\alpha\eta_k \leq (\gamma - 1)/(\gamma + 1)$, we obtain
    \begin{align*}
        \norm{x_{k+1}}
        & \leq
        \norm{x_k} + \eta_k \norm{v_k} \\
        & <
        D + 2\alpha\eta_k\, D + 4 \eta_k L_F \\
        & \leq
        \frac{3\gamma-1}{\gamma+1}D + \frac{\gamma-1}{\gamma+1} \,\frac{4L_F}{\alpha}.
    \end{align*}
    Since $\gamma>1$, by basic arithmetic calculations, this implies that
    \begin{align*}
        \norm{x_{k+1}}^2
        & \leq
        \left(\frac{3\gamma - 1}{\gamma + 1}\right)^2 D^2 + \frac{2(3\gamma - 1)(\gamma - 1)}{(\gamma + 1)^2}\,\frac{4L_F D}{\alpha} + \left(\frac{\gamma - 1}{\gamma + 1}\right)^2\left(\frac{4L_F}{\alpha}\right)^2 \\
        & <
        \gamma^2 D^2 + 2\gamma(\gamma - 1)\frac{4L_F D}{\alpha} + \gamma(\gamma - 1)\left(\frac{4L_F}{\alpha}\right)^2 \\
        & =
        \gamma D^2 + \gamma(\gamma - 1)\left(D + \frac{4L_F}{\alpha}\right)^2.
    \end{align*}
    
    For case $(b)$, the constraint $g_{m+1}(x_k)\geq 0$ and thus enters the velocity polytope. By the definition of $V_\alpha(x_k)$, we know that $\alpha (\norm{x_k}^2 - D^2) + 2 x_k^\top v_k \leq 0$. This implies that
    \begin{align*}
        \norm{x_{k+1}}^2 - D^2
        & =
        \norm{x_k}^2 - D^2 + 2x_k^\top(x_{k+1} - x_k) + \norm{x_{k+1} - x_k}^2 \\
        & =
        \norm{x_k}^2 - D^2 + 2\eta_k\, x_k^\top v_k + \eta_k^2\norm{v_k}^2 \\
        & \leq
        (1 - \alpha\eta_k)(\norm{x_k}^2 - D^2) + \eta_k^2\norm{v_k}^2.
    \end{align*}
    Applying the inequality that $(a+b)^2\leq (1+1/\gamma)a^2 + (1+\gamma)b^2$ for any $a,b\in\bR$ and $\gamma>0$ to \eqref{eq:x_t-to-v_t}, we further have that for $\gamma > 1$ and $\alpha\eta_k\leq (\gamma - 1)/(\gamma + 1)$,
    \begin{align*}
        \norm{x_{k+1}}^2 - D^2
        & \leq
        (1 - \alpha\eta_k)(\norm{x_k}^2 - D^2) + (1+\gamma^{-1})\alpha^2\eta_k^2\norm{x_k}^2 + (1+\gamma)\alpha^2\eta_k^2\left(D + \frac{4L_F}{\alpha}\right)^2 \\
        & \leq
        (1 - \alpha\eta_k)(\norm{x_k}^2 - D^2) + (1 - \gamma^{-1})\alpha\eta_k\norm{x_k}^2 + (\gamma - 1)\alpha\eta_k\left(D + \frac{4L_F}{\alpha}\right)^2 \\
        & =
        \Big(1 - \alpha\eta_k + (1 - \gamma^{-1})\alpha\eta_k \Big)(\norm{x_k}^2 - D^2) + 
        (1 - \gamma^{-1})\alpha\eta_k D^2 + (\gamma - 1)\alpha\eta_k\left(D + \frac{4L_F}{\alpha}\right)^2 \\
        & =
        \left(1 - \frac{\alpha\eta_k}{\gamma}\right)(\norm{x_k}^2 - D^2) + (1 - \gamma^{-1})\alpha\eta_k D^2 + (\gamma - 1)\alpha\eta_k\left(D + \frac{4L_F}{\alpha}\right)^2.
    \end{align*}
    By the induction assumption, $\norm{x_k}^2\leq \gamma D^2 + \gamma(\gamma - 1)(D + 4L_F/\alpha)^2$, which concludes that $\norm{x_{k+1}}^2\leq \gamma D^2 + \gamma(\gamma - 1)(D + 4L_F/\alpha)^2$ as well.
    
    For both cases, we are able to prove that $\norm{x_{k+1}}^2\leq \gamma D^2 + \gamma(\gamma - 1)(D + 4L_F/\alpha)^2$. As a result, the bound on $\norm{x_t}^2$ holds for every $t=0,1,\cdots,T-1$. By \eqref{eq:x_t-to-v_t}, we also have that
    \begin{align*}
        \norm{v_t}^2
        & \leq
        \alpha^2(1 + \gamma^{-1})\norm{x_t}^2 + \alpha^2(1 + \gamma)\left(D + \frac{4L_F}{\alpha}\right)^2 \\
        & \leq
        (\gamma + 1)\alpha^2 D^2 + \gamma(\gamma + 1)\alpha^2\left(D + \frac{4L_F}{\alpha}\right)^2.
    \end{align*}
    This concludes the proof.
\end{proof}

\section{CGM with Simplex Constraints} \label{app:sp}

In this section, we provide details on how to derive the closed-form solutions of the quadratic program when applying CGM to problems with simplex constraints, which leads to the direct method stated in Algorithm \ref{algo:cgm-sp}. At each step, CGM solves the following quadratic program to decide the update direction:
\begin{align*}
    & \min_{v\in\bR^d} \; \frac{1}{2}\norm{v + F(x)}^2, \\
    & \text{s.t. } \quad \sum_{i=1}^d v_i = \alpha \left(1 - \sum_{i=1}^d x_i\right), \\
    & \phantom{\text{s.t. }} \quad v_i \geq -\alpha x_i, \;\forall i\in\{ j\in[d] \,|\, x_j \leq 0\},
\end{align*}
where we omit the subscript $t$ for simplicity and $x_i$ denotes the $i$-th coordinate of $x$. Let $p=v/\alpha + x$ and $q=x-F(x)/\alpha$. The above optimization problem is equivalent to
\begin{align*}
    & \min_{p\in\bR^d} \, \frac{1}{2}\norm{p - q}^2, \\
    & \text{s.t. } \quad \sum_{i=1}^d p_i = 1, \\
    & \phantom{\text{s.t. }} \quad p_i \geq 0, \;\forall i\in N,
\end{align*}
where the index set $N\subseteq[d]$ with size $n \in[0,d]$ denotes the set $\{ j\in[d] \,|\, x_j \leq 0\}$. Let the solution of the above problem be denoted by proj$_v(q, N)$. The actual update direction is then $v=\alpha(\text{proj}_v(x-F(x)/\alpha)-x)$, which gives the update in Algorithm \ref{algo:cgm-sp}.

Note that the major difference compared to a projection oracle onto the simplex is that we only restrict the coordinate in $N\subseteq[d]$ to be non-negative. We then derive Algorithm \ref{algo:proj-v} for solving the above quadratic program following the analysis in \citet{wang2013projection} for the projection algorithm on the simplex \citep{duchi2008efficient}. The Lagrangian function of the above problem is
\begin{equation*}
    \mathcal{L}(p, \lambda, \beta) = \frac{1}{2}\norm{p - q}^2 - \lambda \left(\sum_{i=1}^d p_i - 1\right) - \sum_{i\in N} \beta_i p_i,
\end{equation*}
and the corresponding KKT system has the following form:
\begin{align*}
    & \sum_{i=1}^d p_i = 1, \\
    & p_i - q_i - \lambda = 0, \,\forall i\notin N, \\
    & p_i - q_i - \lambda - \beta_i = 0, \, \forall i\in N, \\
    & p_i \geq 0, \forall i\in N, \\
    & \beta_i \geq 0, \forall i \in N, \\
    & p_i\beta_i = 0, \forall i \in N.
\end{align*}
The problem reduces to finding $\lambda$ and $\beta_i$. We observe that $\forall i\in N$, $\beta_i=0$ if $p_i>0$, and $q_i+\lambda = -\beta_i\leq 0$ if $p_i=0$. This means that $\forall i\in N$, $p_i=\max\{q_i+\lambda, 0\}$, and the sequence $\{p_i \,|\, i\in N\}$ shares the same ordering as $\{q_i \,|\, i\in N\}$. Since $\forall i\notin N, p_i=q_i+\lambda$, the problem left is to determine $\lambda$. Without loss of generality, we assume
\begin{align*}
    & q_1 \geq q_2 \geq \cdots \geq q_\rho \geq q_{\rho+1} \geq \cdots \geq q_n, \\
    & p_1 \geq p_2 \geq \cdots \geq p_\rho > p_{\rho+1} = \cdots = p_n. 
\end{align*}
Here, $\rho$ is the index such that $p_{\rho+1} = \cdots = p_n = 0$. Otherwise, a sorting of $\{q_i \,|\, i\in N\}$ can be applied. Let $s_{\bar N}=\sum_{i\notin N} q_i$. By the KKT system, we have that
\begin{align*}
    1 
    & = \sum_{i\in N} p_i + \sum_{i\notin N} p_i \\
    & = \sum_{i\in N, p_i> 0} (q_i + \lambda) + \sum_{i\notin N} (q_i + \lambda) \\
    & = (d - n + \rho) \lambda + s_{\bar N} + \sum_{i=1}^\rho q_i.
\end{align*}
This implies that the solution of $\lambda$ is
\begin{equation*}
    \lambda = \frac{1}{d-n+\rho} \left(1 - s_{\bar N} - \sum_{i=1}^\rho q_i\right).
\end{equation*}
The value of $\rho$ plays an important role. We then show that
\begin{equation*}
    \rho=
    \begin{cases}
        & 0, \quad \text{if } J=\emptyset, \\
        & \max J, \quad \text{otherwise},
    \end{cases}
\end{equation*}
where the set $J$ is defined to be
\begin{equation*}
    J=\left\{1\leq j \leq n \,\middle|\, q_j + \frac{1}{d-n+j}\left(1 - s_{\bar N} - \sum_{i=1}^j q_i\right) > 0\right\}.
\end{equation*}
Note that the algorithm includes the cases for $n=0$ and $n=d$. When $n=0$, we know $J=\emptyset$ and $\rho=0$. When $n=d$, it is easy to show that $J\neq\emptyset$ and the algorithm reduces to projection onto the simplex.

If  $J=\emptyset$, we have that
\begin{align*}
    q_1 + \frac{1}{d-n+1}(1 - s_{\bar N} - q_1)
    & =
    \frac{1}{d-n+1}(1 - s_{\bar N} + (d-n) q_1) \\
    & =
    \frac{d-n}{d-n+1}(q_1 + \lambda) + \frac{1}{d-n+1} \sum_{i=1}^\rho (q_i + \lambda) \\
    & \leq 0.
\end{align*}
This implies that $q_i+\lambda\leq 0, \forall i\in N$ and $\rho=0$. 

If $J\neq\emptyset$, we consider the following two cases. For $j\leq\rho$, we have that
\begin{align*}
    q_j + \frac{1}{d-n+j}\left(1 - s_{\bar N} - \sum_{i=1}^j q_i\right)
    & =
    \frac{1}{d-n+j}\left(1 - s_{\bar N} + (d - n + j)q_j - \sum_{i=1}^j q_i\right) \\
    & =
    \frac{1}{d-n+j}\left(1 - s_{\bar N} - \sum_{i=1}^\rho q_i + (d - n + j)q_j + \sum_{i=j+1}^\rho q_i\right) \\
    & =
    \frac{1}{d-n+j}\left((d - n + \rho)\lambda + (d - n + j)q_j + \sum_{i=j+1}^\rho q_i\right) \\
    & =
    \frac{1}{d-n+j}\left((d - n + j)(q_j + \lambda) + \sum_{i=j+1}^\rho (q_i+\lambda)\right) \\
    & > 0.
\end{align*}
For $j>\rho$, we have that
\begin{align*}
    q_j + \frac{1}{d-n+j}\left(1 - s_{\bar N} - \sum_{i=1}^j q_i\right)
    & =
    \frac{1}{d-n+j}\left(1 - s_{\bar N} - \sum_{i=1}^\rho q_i + (d - n + j)q_j - \sum_{i=\rho+1}^j q_i\right) \\
    & =
    \frac{1}{d-n+j}\left((d - n + \rho)\lambda + (d - n + j)q_j - \sum_{i=\rho+1}^j q_i\right) \\
    & =
    \frac{1}{d-n+j}\left((d - n + \rho)(q_j + \lambda) - \sum_{i=\rho+1}^j (q_i - q_j)\right) \\
    & \leq 0.
\end{align*}
This implies that $\rho=\max J$ and leads to the method described in Algorithm \ref{algo:proj-v}. Since the algorithm only requires to sort $n$ coordinates instead of $d$, it is provably more efficient compared to the projection oracle. The difference is particularly pronounced if the quadratic program is sparse.

\end{document}